\title{The topology of tile invariants}
\author{Michael P. Hitchman}
\address{Linfield College}
\email{mhitchm@linfield.edu}
\date{June 5, 2012}
\subjclass[2010]{Primary 57M20, 52C20.}
\theoremstyle{plain}
\newtheorem{theorem}{Theorem}[section]
\newtheorem{lemma}[theorem]{Lemma}
\begin{document}
\rmj 
\setcounter{page}{539}
\maketitle
\begin{abstract}
In this note we use techniques in the topology of 2-complexes to recast some tools that have arisen in the study of planar tiling questions.  With spherical pictures we show that the tile counting group associated to a set $T$ of tiles and a set of regions tileable by $T$ is isomorphic to a quotient of the second homology group of a 2-complex built from $T$.  In this topological setting we derive some well-known tile invariants, one of which we apply to the solution of a tiling question involving modified rectangles.
\end{abstract}
\section{Introduction}

Two papers published in 1990, one by Conway and Lagarias \cite{conway} and one by Thurston \cite{thurston}, introduced topological and combinatorial group theoretic ideas to the solution of certain tiling questions.  About ten years later a tile counting group was developed in a series of papers by Pak and others (see \cite{korn, moore, muchnik, pak, sheffield}), in an effort to study the relations among the tiles that must persist in any tiling of a given region.
In this note we show that for a large class of tile sets and tileable regions, the tile counting group is isomorphic to a quotient of the second cellular homology group of a 2-complex associated to the tile set (Theorem \ref{th:1}).  We then use techniques in low-dimensional topology to revisit the derivation of tile invariants found in \cite{conway, pak} and subsequently used to solve tiling problems.  Finally, we use tile invariants to solve (in Theorem \ref{th:4}) the following tiling question:

For what values of $a$ and $b$ can the modified rectangle $M(a,b)$ (obtained by removing the upper left and lower right squares from an $a \times b$ rectangle in the integer lattice) be tiled by translations of the tiles in the set $S$ pictured in Figure \ref{fig:Sintro}?

\begin{figure}[h]
\centering
\centerline{\includegraphics[width=1.3in]{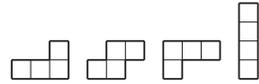}}
\caption{The tile set $S$.}\label{fig:Sintro}
\end{figure}

The general scene in this note is as follows.  We have a set of tiles $T = \{t_1, t_2, \ldots, t_n\}$ in the integer lattice.  We assume each tile $t_i$ consists of a finite union of squares 
in the lattice and that the resulting region is homeomorphic to a disk.  Ribbon tiles, which have been widely studied, fit into this category (see \cite{conway, korn, moore, muchnik,pak, reid, sheffield, thurston}).
A {\em ribbon tile} of area $n$ consists of $n$
squares laid out in a path, such that if we start with the bottom left-most square, each step either goes up or to the right.
 If $T_n$ denotes the set of ribbon tiles having area $n$, we note that $T_n$ will have $2^{n-1}$ members.  Figure \ref{fig:T3} shows the set $T_3$ of ribbon tile trominoes, and Figure \ref{fig:T4} shows the set $T_4$ of ribbon tile tetrominoes.

\begin{figure}[h]
\centering
\centerline{\includegraphics{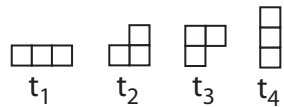}}
\caption{The set $T_3$ of ribbon tile trominoes.}\label{fig:T3}

\end{figure}

\begin{figure}[h]
\centering
\centerline{\includegraphics{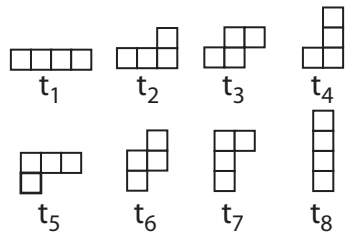}}
\caption{The set $T_4$ of ribbon tile tetrominoes.}\label{fig:T4}
\end{figure}

Suppose $\Gamma$ is a region that can be tiled by a tile set $T$.  By this we mean that we may cover the region without gaps or overlaps with translations of copies of the tiles in $T$.  We cannot rotate the tiles in $T$.   We restrict our attention to those regions $\Gamma$ that are homeomorphic to a disk (such regions are also called simply connected polyominoes in tiling circles) because we wish to take the view that tilings represent maps from a disk to a particular 2-complex.

If $\alpha$ represents a particular tiling by $T$ of a region $\Gamma$, we let $b_i(\alpha)$ equal the number of copies of tile $t_i$ that appear in the tiling.  There are certain relations among the $b_{i}(\alpha)$ that hold for all tilings of a given region, and any such relation is called a tile invariant in \cite{pak}. One simple relation is an area invariant: If all tiles in $T$ have the same area (number of squares), then for any tiling $\alpha$ of $\Gamma$, $\sum_{i=1}^n b_i(\alpha)$ is constant.  In \cite{conway}, Conway and Lagarias prove that for any tiling $\alpha$ of a given simply connected region $\Gamma$ by the set $T_3$ of ribbon tile trominoes given in Figure \ref{fig:T3}, $b_3(\alpha)-b_2(\alpha)$ is constant. We revisit this result below (Theorem \ref{th:T3}).

The {\bf tile counting group} $G(T,\mathcal{R})$ associated to a tile set $T$ and a given set of regions $\mathcal{R}$ is an abelian group whose elements correspond to tile invariants.  This group is defined in \cite{pak} as follows.  Begin with all formal integer linear combinations of the tiles in $T$, and then divide by $I$, the linear span of all relations of the form $$\sum_{i=1}^n b_i(\alpha)t_i - \sum_{i=1}^nb_i(\beta)t_i,$$ for all tilings $\alpha$ and $\beta$ of regions $\Gamma$ in $\mathcal{R}$.  That is,   $$G(T,\mathcal{R}) = \bigoplus_{i=1}^n \mathbb{Z} t_i \bigg/ I.$$
In a series of papers (\cite{moore, mechanic, pak}), Pak and others determine the tile counting group for the set $T_n$ (for $n \geq 2$) over all simply connected tileable regions.  In \cite{korn}, Korn determines the tile counting group for several other interesting tile sets over various families of tileable regions.
We introduce the picture group in the next section to approach the tile counting group topologically.


\section{The Picture Group}

Suppose $\mathcal{P} = \langle {\bf x} : {\bf r}\rangle $ presents the group $G$.  We let $F$ denote the free group generated by the set of letters ${\bf x}$, and we let $R$ denote the normal subgroup of $F$ generated by the words in ${\bf r}$ (called relators) so that $G = F/R$.  The standard 2-complex $X_{\mathcal{P}}$ associated to $\mathcal{P}$ consists of a single 0-cell $c^0$, a 1-cell $c^1_x$ for each generator $x \in {\bf x}$, and a 2-cell $c^2_r$ for reach relator $r \in {\bf r}$.  The boundary of each 1-cell must be attached to $c^0$ so that the 1-skeleton of $X_{\mathcal P}$, $X^1_{\mathcal{P}}$, is a bouquet of circles.  The 2-cell $c^2_r$ is attached to $X^1_{\mathcal{P}}$ along its boundary according to the word spelled by the relator.

For instance, the standard 2-complex associated to $\mathcal{P} = \langle x, y ~|~ [x,y]\rangle$ of the group $\mathbb{Z} \times \mathbb{Z}$ is the torus (here $[x,y]$ denotes the word $xyx^{-1}y^{-1}$).  The 1-skeleton of the space consists of two circles joined at the 0-cell.  The space has a single 2-cell attached to the 1-skeleton according to the word $xyx^{-1}y^{-1}$.  We may model this process by identifying opposite edges of a rectangle in the usual way to obtain a torus.

The standard 2-complex $X_{\mathcal{P}}$ associated to a presentation is not so easy to visualize in general, but we may represent elements of $\pi_2(X_{\mathcal{P}})$ with tangible objects called spherical pictures. A good introduction to pictures may be found in \cite{thebook} but we present the main ideas in this section.

\begin{figure}[h]
\centering
\centerline{\includegraphics[width=3in]{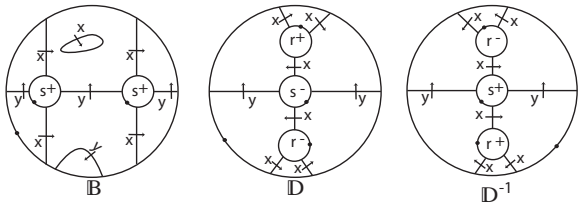}}
\caption{Some pictures over $\mathcal{P} = \langle x,y ~|~r=x^3, s = [y,x]\rangle$.}\label{fig:pic}
\end{figure}

A {\em picture} over a presentation $\mathcal{P}$ has an ambient disk with basepoint, and a finite number $\Delta_1, \Delta_2, \ldots, \Delta_k$ of based disks in the interior of the ambient disk.  In addition to a basepoint, each $\Delta_i$ is labelled with a relator and an exponent of $\pm 1$.  A picture also includes a finite number of arcs.  The two ends of an arc are either attached to disk boundaries or joined to each other (creating a loop).  No two arcs may intersect, and no arc can intersect the interior of any of the $\Delta_i$ disks.  Furthermore, each arc gets a label from the generator set and a transverse orientation.  The disk and arc labelling must be consistent in this sense:  traversing the boundary of the disk $\Delta_i$ in the clockwise direction from its basepoint, the word one spells from the arc labels it encounters equals the disk label (using the convention that the exponent of the generator associated to an arc is $+1$ or $-1$ depending on whether the path around $\Delta_i$ is in the direction of the transverse arrow associated to the arc, or against this direction).

A picture in which no arc intersects the boundary of the ambient disk is called a {\em spherical picture}.  A picture $\mathbb{B}$ over $\mathcal{P}$ has a boundary word $\partial \mathbb{B}$ in $F$ associated with it, read by the generator labels encountered by traversing the boundary of the ambient disk once in the clockwise direction from the ambient basepoint.  By a lemma of van Kampen \cite{vk} this boundary word is trivial in the group presented by $\mathcal{P}$.

\begin{lemma} (van Kampen)  Suppose $\mathcal{P}$ presents the group $F/R$.  A word $w \in F$ is trivial in the group $F/R$ if and only if there exists a picture $\mathbb{B}$ over the presentation $\mathcal{P}$ whose boundary word $\partial\mathbb{B}$ is identically equal to $w$.
\end{lemma}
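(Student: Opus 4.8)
The plan is to prove the two implications separately, leaning on the standard algebraic fact that a word $w \in F$ is trivial in $G = F/R$ if and only if it lies in $R$, i.e., if and only if it can be written in $F$ as a product of conjugates of relators,
\[
w =_F \prod_{j=1}^m u_j\, r_j^{\epsilon_j}\, u_j^{-1},
\qquad u_j \in F,\ r_j \in {\bf r},\ \epsilon_j = \pm 1 .
\]
Since $R$ is the normal closure of the relators, this is just the definition of membership in $R$. The entire lemma then amounts to translating this normal form into the planar language of pictures and back.

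For the forward direction (a picture forces triviality), I would show directly that $\partial\mathbb{B}$ is such a product. I would first choose a \emph{spray}: a system of pairwise-disjoint embedded paths $\tau_1, \ldots, \tau_k$ running from the ambient basepoint to the basepoints of the internal disks $\Delta_1, \ldots, \Delta_k$, each avoiding disk interiors and crossing the picture arcs transversally (possible because the complement of the disks in the ambient disk is connected). Reading the labels of the arcs crossed by $\tau_i$, with signs given by the transverse orientations, yields a word $u_i \in F$, while the disk-arc consistency condition guarantees that the clockwise boundary word read around $\Delta_i$ is exactly $r_i^{\epsilon_i}$. I would then prove by induction on $k$ that
\[
\partial\mathbb{B} =_F \prod_{i=1}^k u_i\, r_i^{\epsilon_i}\, u_i^{-1},
\]
the order being dictated by the cyclic order in which the sprays leave the basepoint; each inductive step pulls an outermost disk out to the ambient boundary along its spray, deleting that disk and splicing $u_i r_i^{\epsilon_i} u_i^{-1}$ into the boundary word, and the base case $k = 0$ is settled by the innermost-arc reduction described below. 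Since the right-hand side lies in $R$, so does $\partial\mathbb{B} = w$.

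For the reverse direction (triviality forces a picture), I would run this construction backward. Given the factorization of $w$ above, I build one gadget per factor: a based disk $\Delta_j$ labelled $r_j^{\epsilon_j}$, with one arc leaving $\Delta_j$ for each letter of $r_j$, routed as a parallel bundle straight out to the ambient boundary so that crossing it clockwise spells $r_j^{\epsilon_j}$ (the consistency condition holds by construction). To conjugate by $u_j$, I surround the exit region of the bundle by a nested family of arcs, one per letter of $u_j$, each with both endpoints on the ambient boundary; read clockwise, these contribute $u_j$ just before the bundle and $u_j^{-1}$ just after it. Placing the $m$ gadgets side by side along the boundary keeps all arcs disjoint, so planarity is automatic, and the boundary word is the literal product $u_1 r_1^{\epsilon_1} u_1^{-1} \cdots u_m r_m^{\epsilon_m} u_m^{-1}$.

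The remaining subtlety, which I expect to be the main obstacle, is that the lemma demands $\partial\mathbb{B}$ be \emph{identically} equal to $w$, whereas the gadget construction produces only a word \emph{freely} equal to $w$. I would bridge this gap by realizing each elementary free expansion or cancellation of a pair $x\,x^{-1}$ as a local move on the picture: inserting or deleting a single arc labelled $x$ with both of its endpoints adjacent on the ambient boundary changes the boundary word by exactly $x\,x^{-1}$, introducing neither a disk nor a crossing. Applying finitely many such moves converts the gadget boundary word into the prescribed word $w$. This same boundary-to-boundary arc is precisely what powers the innermost-arc reduction in the forward base case, so a single planar mechanism drives both halves of the argument.
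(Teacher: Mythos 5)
First, a point of reference: the paper does not actually prove this lemma; it quotes it as a known result, citing van Kampen \cite{vk} and referring to \cite{thebook} for the theory of pictures. So your proposal can only be compared with the standard combinatorial argument, which is indeed the one you outline: the forward direction via a spray and induction on the number of disks (reading off conjugating words $u_i$ and splicing $u_i r_i^{\epsilon_i} u_i^{-1}$ into the boundary word as each disk is pulled out), and the reverse direction via one ``conjugated relator'' gadget per factor of an expression $w =_F \prod_j u_j r_j^{\epsilon_j} u_j^{-1}$. Both of those outlines are correct and standard.

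The genuine gap sits exactly in the step you flagged as the main subtlety: upgrading ``freely equal to $w$'' to ``identically equal to $w$.'' Your proposed mechanism --- inserting or deleting a single arc labelled $x$ with both endpoints adjacent on the ambient boundary --- correctly realizes a free \emph{insertion} of $xx^{-1}$, but it does not realize a general free \emph{cancellation}. In the gadget picture, the two letters of a cancelling pair typically belong to two \emph{distinct} arcs: for instance, the last letter of $u_j^{-1}$ and the first letter of $u_{j+1}$ come from arcs in different gadgets, and the unreduced product word $u_1 r_1^{\epsilon_1}u_1^{-1}\cdots u_m r_m^{\epsilon_m}u_m^{-1}$ is precisely where such reductions must occur. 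Two distinct arcs cannot be removed by ``deleting a single arc,'' and deletions cannot in general be avoided, since insertions only lengthen the boundary word while the product word usually must shrink before it can be re-expanded to $w$. The repair is exactly the mechanism the paper records in Figure~\ref{fig:bridgefloat}: when two arc-ends meet the ambient boundary at adjacent points with the same generator label and opposite orientations, a BRIDGE move followed by a FLOAT joins them into a single interior arc, removing the pair $xx^{-1}$ from the boundary word (equivalently, perform a direct surgery joining the two arc-ends by a segment parallel to the boundary, after checking that the transverse orientations are compatible --- they are, precisely because the two letters cancel). With that move added, your construction closes. Note that your forward base case is unaffected: there the innermost-arc argument is sound as stated, because the two cancelling letters really do come from a single arc, and in any case the forward direction needs only free-group bookkeeping, not picture moves.
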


Figure \ref{fig:pic} shows three pictures over  $\mathcal{P} = \langle x,y ~|~x^3, [y,x]\rangle$ of the group $\mathbb{Z} \times \mathbb{Z}_3$, where we let $r$ denote the relator $x^3$, and $s$ denotes the relator $[y,x]$.  Note that $\partial \mathbb{B}$ and $\partial \mathbb{D}$ are both equal to $yx^2y^{-1}x^{-2}$ in $F$.

We define an equivalence relation on pictures over $\mathcal{P}$ by $\mathbb{B}_1 \sim \mathbb{B}_2$ if and only if $\mathbb{B}_1$ can be transformed to $\mathbb{B}_2$ (up to planar isotopy) by a finite number of moves of three possible types, called FLOAT, BRIDGE, and FOLD moves.  A FLOAT move is the insertion or deletion of a floating arc, an arc that divides the ambient disk into two components, one of which contains the global basepoint and all the disks and remaining arcs of the picture.  The picture $\mathbb{B}$ in Figure \ref{fig:pic} has two floating arcs, one labeled by $y$ that hits the boundary of the ambient disk, and one loop labeled by $x$.   A BRIDGE move is a local move with respect to two arcs in a picture sharing a generator label as indicated in Figure \ref{fig:picmove}.  A FOLD move is the insertion or deletion of a folding pair of disks (see Figure \ref{fig:picmove}).  A folding pair of disks consists of two disks, labelled by the same relator but with opposite signs, such that each arc of one disk connects to the other, and such that the disk basepoints are in the same region of the picture.

\begin{figure}[h]
\centering
\centerline{\includegraphics[width=2.2in]{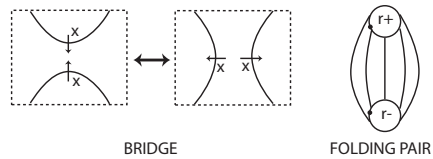}}
\caption{BRIDGE and FOLD moves}\label{fig:picmove}
\end{figure}

Note that when two disks of a picture have arcs that meet the boundary of the ambient disk successively as suggested in Figure \ref{fig:bridgefloat}, and if those arcs are labeled by the same generator but with opposite orientations, we may replace these two arcs by a single arc between the disks by performing a BRIDGE move followed by a FLOAT.

\begin{figure}[h]
\centering
\centerline{\includegraphics[width=3.5in]{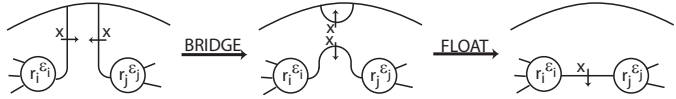}}
\caption{Joining two disks by a BRIDGE, FLOAT sequence}\label{fig:bridgefloat}
\end{figure}

We may multiply two pictures by concatenation so that $\partial(\mathbb{B}\cdot \mathbb{D}) = \partial \mathbb{B} \cdot \partial\mathbb{D}$ (see Figure \ref{fig:picprod}). Given $w \in F$, the picture $w \cdot \mathbb{B}$  is obtained by surrounding the ambient basepoint of $\mathbb{B}$ by a series of arcs whose labels spell $w$  (see Figure \ref{fig:picprod}).  Note that $\partial(w\cdot \mathbb{B}) = w \partial \mathbb{B} w^{-1}$.  The inverse $\mathbb{D}^{-1}$ of a picture is obtained by planar reflection of $\mathbb{D}$ and a change of sign of the disks of $\mathbb{D}$ (Figure \ref{fig:pic} shows a picture $\mathbb{D}$ and its inverse $\mathbb{D}^{-1}$).

\begin{figure}[h]
\centering
\centerline{\includegraphics[width=2.4in]{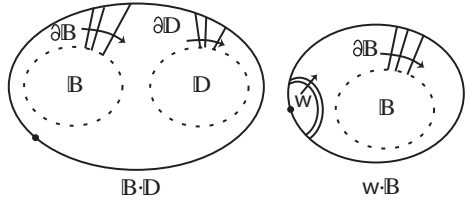}}
\caption{The product of two pictures, the $F$ action on $\Sigma$.}\label{fig:picprod}
\end{figure}

To get a better feel for the arithmetic of pictures, consider again the pictures from Figure \ref{fig:pic}.  The product of $\mathbb{B}$ (with its floating arcs removed) and $\mathbb{D}^{-1}$ is given in Figure \ref{fig:pics2}.  By a series of BRIDGE and FLOAT moves along the lines of Figure \ref{fig:bridgefloat} this picture is equivalent to the spherical picture at right in Figure \ref{fig:pics2}. That the result is a spherical picture is no accident:  if $\mathbb{B}$ and $\mathbb{D}$ are based pictures over $\mathcal{P}$ such that $\partial\mathbb{B}$ and $\partial\mathbb{D}$ determine the same word in the free group $F$, then $\mathbb{B}\cdot \mathbb{D}^{-1}$ is equivalent to a spherical picture over $\mathcal{P}$.

\begin{figure}[h]
\centering
\centerline{\includegraphics[width=3in]{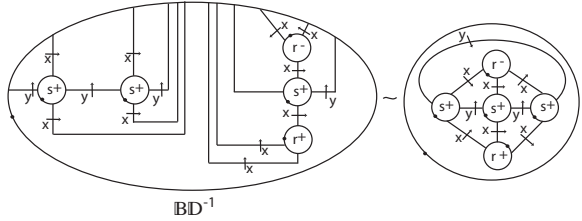}}
\caption{The picture $\mathbb{B}\cdot \mathbb{D}^{-1}$ is spherical.}\label{fig:pics2}
\end{figure}

Let $[\mathbb{B}]$ denote the equivalence class of $\mathbb{B}$, and define $\Sigma = \{ [\mathbb{B}] ~|$ $\mathbb{B} {\rm ~ a ~ picture~over~} \mathcal{P}\}$.  It is not difficult to check that $\Sigma$ forms a group under multiplication given by $[\mathbb{B}] \cdot [\mathbb{D}]=[\mathbb{B} \cdot \mathbb{D}]$.  The identity element of $\Sigma$ is $[\mathbb{O}]$ where $\mathbb{O}$ is the empty picture (consisting of just an ambient disk). One checks that $[\mathbb{B}]^{-1}=[\mathbb{B}^{-1}]$ and that $w\cdot [\mathbb{B}] = [w\cdot \mathbb{B}]$ defines an $F$-action on $\Sigma$.  We call $\Sigma$ the group of pictures over $\mathcal{P}$.

The set $P ~=~ \{ [\mathbb{P}] ~~|~~ \mathbb{P} ~{\rm a~spherical~picture~over~}\mathcal{P}\}$ determines a normal subgroup of $\Sigma$ which we call the group of spherical pictures over $\mathcal{P}$.

Pictures over $\mathcal{P}$ give combinatorial representations of continuous maps from the unit disk (or the 2-sphere if the picture is spherical) into $X_{\mathcal{P}}$. One can show that two pictures represent homotopy equivalent maps if and only they determine the same element in the picture group.  In fact, the group of pictures is isomorphic to the relative homotopy group $\pi_2(X_{\mathcal{P}}, X_{\mathcal{P}}^1)$, and the group of spherical pictures is isomorphic to $\pi_2(X_{\mathcal{ P}})$.   We refer the reader to \cite{thebook} for details.  Thus, elements of $\pi_2(X_{\mathcal{P}})$ may be represented by spherical pictures, and elements of $\pi_2(X_{\mathcal{P}}, X_{\mathcal{P}}^1)$ may be represented by (disk) pictures.  

The group of spherical pictures $\pi_2(X_{\mathcal{P}})$ has a left $\mathbb{Z} G$-module structure.  One can check that if $w \in R$ and $\mathbb{P}$ is spherical then $w \cdot \mathbb{P} \sim \mathbb{P}$.  That is, $R$ acts trivially on $\pi_2(X_{\mathcal{P}})$ so we have a well-defined $G$-action on $\pi_2(X_{\mathcal{P}})$ that extends to give $\pi_2(X_{\mathcal{P}})$ a left $\mathbb{Z} G$-module structure.  When we discuss generators of $\pi_2(X_{\mathcal{P}})$ in what follows we mean module generators.

\section{The tile complex}

We return now to our tile set $T = \{t_1, t_2, \ldots,$ $t_n\}$ in the integer lattice and the task of building a 2-complex associated to the tile set. The 2-complex will be modeled on a group presentation which consists of two generators and $n$ relators, one for each tile in $T$.  In particular, for each $t_i$ chose a basepoint on its boundary, and assign a boundary word in the free group $F = F(x,y)$ from a clockwise path around its boundary as follows: record the letter $y$ (resp. $y^{-1}$) if a vertical edge is traversed to the north (resp. south); and record the letter $x$ (resp. $x^{-1}$) if a horizontal edge is traversed to the east (resp. west).  Let $r_i ~(i = 1, 2, \ldots, n)$ denote the $n$ boundary words of the $n$ tiles.  The group presentation $$\mathcal{P} = \langle x,y~|~r_1, r_2, \ldots, r_n \rangle$$  presents the {\bf tile boundary group} $G_T = F/R$, where $R$ is the normal subgroup of $F$ generated by the $r_i$.  The word $r_i$ depends on the choice of tile basepoint, but words associated to distinct basepoints will be cyclic permutations of each other, and the group determined by the resulting presentation is independent of this choice of basepoint.
To any presentation of the tile boundary group we may associate the standard 2-complex, and we are interested in the homotopy and homology groups associated to this complex.  Again, these groups are independent of our choice of tile basepoints.

By placing tile basepoints at the southwest corner of each tile, the tile boundary group for the set of ribbon tile trominoes $T_3$ (Figure \ref{fig:T3}) has the presentation:
$$\mathcal{P}_{3} = \langle x,y ~|~ [y,x^3],~ (yx)^2y^{-2}x^{-2},~ y^2x^2(y^{-1}x^{-1})^2,~[y^3,x]\rangle.$$
Using southwest corners again for the tiles of $T_4$ (Figure \ref{fig:T4}), its tile boundary group has the 8 relator presentation
$$\mathcal{P}_{4} = \langle x,y ~|~ [y,x^4],~ yx^2yxy^{-2}x^{-3},~ yxyx^2y^{-1}x^{-1}y^{-1}x^{-2}, \ldots,~[y^4,x]\rangle.$$

Thurston considered the tile boundary group in \cite{thurston} for dominoes and lozenges, and Conway and Lagarias worked with the group $[F,F]/R$ in \cite{conway} (which they called the tile homotopy group), where $[F,F]$ denotes the subgroup of $F$ generated by all commutators $[w,v]$.

Reid studied the Conway/Lagarias tile homotopy group with great success in his paper \cite{reid}.    In a sense, the tile boundary group $F/R$ we consider in this note (which Reid calls the tile path group of the tile set) is bigger than we need to analyze tiling questions because simply connected regions in the integer lattice have boundary words that live in $[F,F]$.  For this reason Conway and Lagarias and Reid considered $[F,F]/R$.  However, it turns out that the tile complex associated to a presentation of $F/R$ is well-suited to interpret Pak's tile counting group, so we focus on this group in what follows.

Suppose we have some presentation $\mathcal{P}$ of the tile boundary group $G_T$ based on a choice of tile basepoints. Any tiling $\alpha$ by $T$ of a region $\Gamma$ that is topologically a disk has a natural representation as a based picture over $\mathcal{P}$, and hence determines an element in the picture group $\pi_2(X_{\mathcal{P}},X_{\mathcal{P}}^1)$. In fact, one may think of the original tiling as a van Kampen diagram over $\mathcal{P}$, and the corresponding picture as a planar dual.  In particular, think of the topological boundary of $\Gamma$ as the ambient disk, and each tile in the tiling as a disk in the picture.  Tile $t_i$ gets replaced with a disk labelled $r_i^+$ and two disks share an arc between them if their corresponding tiles share an edge.  Any edge of a tile that intersects the boundary of $\Gamma$ gives rise to an arc from its disk to the ambient disk.  The arcs are labeled according to the boundary letter of the associated edge of the tile. If $\alpha$ employs $k$ tiles to tile $\Gamma$, the resulting disk picture $\mathbb{B}_\alpha$ has $k$ disks, all with positive exponents.

\begin{figure}[h]
\centering
\includegraphics[width=2in]{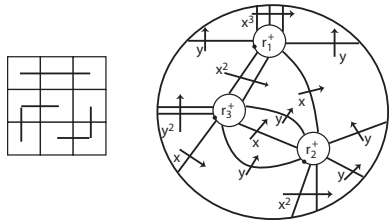}
\caption{Converting a tiling of a region by the tromino set $T_3$ into a picture over $\mathcal{P}_3$.}\label{fig:tilepic}
\end{figure}

Two tilings $\alpha$ and $\beta$ of a given region $\Gamma$ determine a spherical picture $\mathbb{B}_\alpha\cdot \mathbb{B}_\beta^{-1}$ over $\mathcal{P}$, and the equivalence class $[\mathbb{B}_\alpha\cdot \mathbb{B}_\beta^{-1}]$ is an element of the group of spherical pictures, $\pi_2(X_{\mathcal{P}})$.

We note that an arbitrary picture over $\mathcal{P}$, even one with all positive disks, need not correspond to a tiling of a region in the lattice.  Also, tilings of other simply connected regions that are not homeomorphic to a disk (e.g., a region that is the one point union of two such disk regions) may be encoded in the picture group.
Passing to a more general object to investigate tilings can be helpful in so much as we can bring general tools of topology to bear on the picture group, which is not an easy task, in general.

Let $\mathcal{R}$ consist of all regions homeomorphic to a disk, and suppose $X_{\mathcal{P}}$ is the tile complex associated to some presentation of the tile boundary group $G_T$, where  $T = \{t_1, \ldots, t_n\}$ is the tile set.   Let $J$ denote the submodule of $\pi_2(X_{\mathcal{P}})$ generated by all spherical pictures of the form $\mathbb{B}_\alpha\cdot \mathbb{B}_\beta^{-1}$ for all tilings $\alpha$ and $\beta$ for all regions $\Gamma$ in $\mathcal{R}$.  We let $H_2(X_{\mathcal{P}})$ denote the second cellular homology group of $X_{\mathcal{P}}$.

\begin{theorem}\label{th:1} The tile counting group $G(T,\mathcal{R})$ is isomorphic to the quotient $H_2(X_{\mathcal{ P}})/h(J)$, where $h:\pi_2(X_{\mathcal{P}}) \to H_2(X_{\mathcal{P}})$ is the Hurewicz homomorphism.\end{theorem}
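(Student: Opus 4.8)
The plan is to make both sides of the claimed isomorphism concrete and then to match the relation subgroups directly. First I would write down the cellular chain complex of $X_{\mathcal{P}}$. Because $X_{\mathcal{P}}$ has a single $0$-cell, the two $1$-cells $c^1_x$ and $c^1_y$ are loops, so $\partial_1 = 0$ and $C_1 = \mathbb{Z}x \oplus \mathbb{Z}y$. The boundary $\partial_2 : C_2 \to C_1$ sends each $2$-cell $c^2_{r_i}$ to the abelianization of its attaching word $r_i$. The crucial geometric fact is that $r_i$ is the boundary word of the tile $t_i$, hence a closed loop in the lattice with zero net horizontal and vertical displacement; its $x$- and $y$-exponent sums therefore vanish and $\partial_2(c^2_{r_i}) = 0$. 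Thus $\partial_2 = 0$, and since $X_{\mathcal{P}}$ has no $3$-cells,
$$H_2(X_{\mathcal{P}}) = \ker \partial_2 = C_2 = \bigoplus_{i=1}^n \mathbb{Z}\, c^2_{r_i}.$$
The assignment $c^2_{r_i} \mapsto t_i$ then gives a canonical isomorphism $H_2(X_{\mathcal{P}}) \cong \bigoplus_{i=1}^n \mathbb{Z} t_i$ onto the free abelian group underlying $G(T,\mathcal{R})$.

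Next I would pin down the Hurewicz map $h$ in picture-theoretic terms. Passing to the universal cover $\widetilde{X}_{\mathcal{P}}$, whose cellular chain complex is a complex of free $\mathbb{Z}G_T$-modules with $C_2(\widetilde{X}_{\mathcal{P}}) = (\mathbb{Z}G_T)^n$ and no $3$-cells, one has $\pi_2(X_{\mathcal{P}}) \cong H_2(\widetilde{X}_{\mathcal{P}}) = \ker\big(\partial_2 : (\mathbb{Z}G_T)^n \to (\mathbb{Z}G_T)^2\big)$, and under this identification $h$ is induced by the augmentation $\mathbb{Z}G_T \to \mathbb{Z}$, i.e.\ by $-\otimes_{\mathbb{Z}G_T}\mathbb{Z}$. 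Translated to pictures, this says that $h$ forgets the $\mathbb{Z}G_T$-structure and records only the net signed disk count: the $c^2_{r_i}$-coefficient of $h([\mathbb{P}])$ is the number of $r_i^{+}$ disks in $\mathbb{P}$ minus the number of $r_i^{-}$ disks. This description can be extracted from \cite{thebook}; I would give a brief derivation via the augmentation so the note is self-contained, and I would note in particular that $h(g\cdot \mathbb{P}) = h(\mathbb{P})$ for every $g \in G_T$, since augmentation collapses the module action.

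I would then evaluate $h$ on the module generators of $J$. For tilings $\alpha,\beta$ of a region $\Gamma \in \mathcal{R}$, the picture $\mathbb{B}_\alpha$ carries exactly $b_i(\alpha)$ disks labelled $r_i^{+}$, and inverting $\mathbb{B}_\beta$ reverses all signs, so $\mathbb{B}_\beta^{-1}$ carries $b_i(\beta)$ disks labelled $r_i^{-}$. The signed-count description gives
$$h\big([\mathbb{B}_\alpha \cdot \mathbb{B}_\beta^{-1}]\big) = \sum_{i=1}^n \big(b_i(\alpha) - b_i(\beta)\big)\, c^2_{r_i} \;\longleftrightarrow\; \sum_{i=1}^n b_i(\alpha)\,t_i - \sum_{i=1}^n b_i(\beta)\,t_i.$$
Because $h$ collapses the $\mathbb{Z}G_T$-action, $h(J)$ is generated as an abelian group by these images alone, and as $\alpha,\beta,\Gamma$ range over all tilings of all regions in $\mathcal{R}$ they are exactly the generators of the subgroup $I$ in the definition of $G(T,\mathcal{R})$. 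Hence $h(J)$ corresponds to $I$ under the isomorphism of the first paragraph, and
$$H_2(X_{\mathcal{P}})/h(J) \cong \Big(\bigoplus_{i=1}^n \mathbb{Z} t_i\Big)\Big/ I = G(T,\mathcal{R}).$$

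I expect the main obstacle to be the second step: justifying the picture-theoretic description of $h$, that is, that the Hurewicz image of a spherical picture is its signed disk vector. The computation of $H_2$ and the evaluation on $\mathbb{B}_\alpha\cdot\mathbb{B}_\beta^{-1}$ are then just bookkeeping. The cleanest route is through the equivariant chain complex of the universal cover, so the care lies in confirming that the isomorphism $\pi_2(X_{\mathcal{P}}) \cong \ker\partial_2$ carries the disk-counting statistic on pictures to the augmentation of $\mathbb{Z}G_T$-coefficients.
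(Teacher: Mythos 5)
Your proposal is correct and follows essentially the same route as the paper: compute $H_2(X_{\mathcal{P}}) = \bigoplus_{i=1}^n \mathbb{Z}\, c^2_{r_i}$ from the vanishing boundary maps (using that tile boundary words lie in $[F,F]$), describe the Hurewicz map as the signed disk count ${\rm exp}_{r_i}$, evaluate it on the pictures $\mathbb{B}_\alpha \cdot \mathbb{B}_\beta^{-1}$, and identify $h(J)$ with $I$. The only places you go beyond the paper are in deriving the disk-count description of $h$ from the equivariant chain complex of the universal cover (the paper simply quotes it from the literature) and in noting explicitly that $G_T$-invariance of $h$ guarantees $h(J)$ is generated as an abelian group by the images of the module generators of $J$ --- a point the paper's proof leaves implicit but which is needed for the inclusion $h(J) \subseteq I$.
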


\begin{proof}
The cellular chain groups of $X_{\mathcal{P}}$ are $$C_2(X_{\mathcal{P}}) = \bigoplus_{i=1}^n \mathbb{Z} c^2_{r_i},$$ 

$$C_1(X_{\mathcal {P}}) =  \mathbb{Z} c^1_x\oplus \mathbb{Z} c^1_y,$$ 

$$C_0(X_{\mathcal{P}})=\mathbb{Z} c^0.$$

\noindent Since $X_{\mathcal{P}}$ has a single 0-cell, the first boundary map $\partial_1: C_1(X_{\mathcal{P}}) \to C_0(X_{\mathcal{P}})$ is trivial.  Since each tile in $T$ determines a closed path in the integer lattice, its boundary word, whatever the basepoint, is an element of $[F,F]$.  That is, the exponent sum of each generator in the word $r_i$ is zero.  Thus, the second boundary map is also trivial, and it follows that
$$H_2(X_{\mathcal{P}}) = \bigoplus_{i = 1}^n \mathbb{Z} c^2_{r_i}$$

and 

$$H_1(X_{\mathcal{P}}) = \mathbb{Z} \oplus \mathbb{Z}.$$

We may view the Hurewicz homomorphism through the lens of spherical pictures.  Following the notation in \cite{thebook}, given a picture $\mathbb{P}$ over $\mathcal{P}$ we record the (net) number of occurrences of disks labeled by relator $r$ as follows:
$${\rm exp}_r(\mathbb{P}) = (\# {\rm~disks ~in~} \mathbb{P} {\rm ~labelled~} r^+) - (\# {\rm ~disks~ in~} \mathbb{P} {\rm ~labelled~} r^-).$$
Note that if $\mathbb{P}_1 \sim \mathbb{P}_2$ then ${\rm exp}_r(\mathbb{P}_1)= {\rm exp}_r(\mathbb{P}_2)$ since BRIDGE, FLOAT and FOLD moves preserve the (net) number of disks labeled by $r$ in a picture.
Then the Hurewicz homomorphism $$h:\pi_2(X_{\mathcal{P}}) \to H_2(X_{\mathcal{P}})$$ is defined by $$h([\mathbb{P}]) = \sum_{i=1}^n {\rm exp}_r(\mathbb{P}) c^2_r.$$

\noindent Now consider a spherical picture over $\mathcal{P}$ of the form $\mathbb{B}_\alpha \cdot \mathbb{B}_\beta^{-1}$, coming from tilings $\alpha$ and $\beta$ of some region $\Gamma$ in $\mathcal{R}$.  Since each relator in $\mathbb{B}_\alpha$ has positive exponent, and each relator in $\mathbb{B}_\beta^{-1}$ has negative exponent, it follows that $${\rm exp}_{r_i}(\mathbb{B}_\alpha \cdot \mathbb{B}_\beta^{-1}) = b_i(\alpha) - b_i(\beta).$$

\noindent That is, the homological image of this picture under the Hurewicz map is $$h([\mathbb{B}_\alpha \cdot \mathbb{B}_\beta^{-1}]) = \sum_{i =1}^n (b_i(\alpha) - b_i(\beta))c^2_{r_i}.$$  

\noindent Thus, each element in $I$ is naturally identified with an element of $h(J)$, and it follows that 

\hskip1.2in{$G(T,\mathcal{R}) = H_2(X_{\mathcal{P}})/h(J).$}\end{proof}

\smallskip

We view Theorem \ref{th:1} primarily as a theoretical result meant to establish a topological interpretation of the tile counting group,  not as a result that will be generally useful for calculating the tile counting group.  Still, it is useful to explore the connection between these perspectives a bit further by looking at some special cases and revisiting a few well-known examples.

If $\pi_2(X_{\mathcal{P}})$ is generated by spherical pictures that come from the difference of two tilings of a region $\Gamma$, that is, if $\pi_2(X_{\mathcal{P}}) = J$, then the tile counting group is isomorphic to $H_2(G_T)$, the second homology group of the tile boundary group.  This follows from a theorem of Hopf (see, for instance \cite[p. 41]{brown}) which gives us the following exact sequence of groups $$\pi_2(X_{\mathcal{P}}) \xrightarrow{h} H_2(X_{\mathcal{P}}) \to H_2(G_T) \to 0.$$

In fact, all we need to ensure that $G(T,\mathcal{R})$ is isomorphic to $H_2(G_T)$ is that $J$ and $\pi_2(X_{\mathcal{P}})$ have the same image under the Hurewicz map.  An interesting topological question for a tile set is to decide whether this is the case.

In the event that the Hurewicz map $h: \pi_2(X_{\mathcal{P}}) \to H_2(X_{\mathcal{P}})$ is trivial, in which case the space $X_{\mathcal{P}}$ is called {\em Cockcroft},  we note that the tile counting group is isomorphic to $H_2(X_{\mathcal{P}})$, a free abelian group of rank equal to the number of tiles in the tile set.

1. {\em Dominoes}.  Consider the set $T_2 = \{t_1, t_2\}$ of dominoes at left in Figure \ref{fig:dominoes}.  Using southwest corners as basepoints, the domino tile counting group has presentation $$\mathcal{P}_2 = \langle x, y ~|~ [y,x^2], [y^2,x] \rangle.$$  Thurston argued in \cite{thurston} using the universal cover of $X_{\mathcal{P}_2}$ that $\pi_2(X_{\mathcal{P}_2})$ is generated by the spherical picture appearing in Figure \ref{fig:dominoes}.  This spherical picture results from the difference of two tilings of a 2 $\times$ 2 square, one by two horizontal tiles and one by two vertical tiles, as indicated at right in Figure \ref{fig:dominoes}.  Thus, $\pi_2(X_{\mathcal{P}_2}) = J$ in the case of dominoes, and the tile counting group is $H_2(G_{T_2})=\mathbb{Z} \times \mathbb{Z}_2$.

\begin{figure}[h]
\centering
\includegraphics[width=3in]{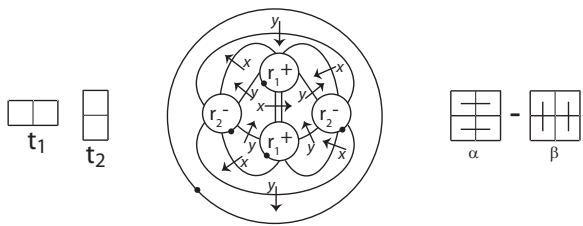}
\caption{$\pi_2(X_{\mathcal{P}_2})$ is generated by a single picture which lives in $J$.}\label{fig:dominoes}
\end{figure}

As Thurston observed, this analysis of $\pi_2(X_{\mathcal{P}_2})$ gives a local replacement property for tilings involving dominoes.  Given two tilings $\alpha$ and $\beta$ of any region $\Gamma$ homeomorphic to a disk, then $\mathbb{B}_\alpha \cdot \mathbb{B}_\beta^{-1}$ determines an element of $\pi_2(X_{\mathcal{P}_2})$ and hence must be expressible in terms of the picture in Figure \ref{fig:dominoes}. This translates to the notion that any domino tiling of a region $\Gamma$ may be obtained from any other tiling of $\Gamma$ by a finite sequence of ``2-flips'' that exchange a horizontally tiled 2 $\times$ 2 square portion of $\Gamma$ with a vertically tiled portion, or vice versa.

2. {\em Ribbon Tiles}.  In \cite{sheffield}, Sheffield proved the deep result that any two tilings of a simply connected region by $T_n$ can be made to coincide by a number of ``2-flips'' chosen from a finite list of local moves.  For instance, any tiling of a given region by the set $T_3$ can be changed into any other tiling of that region by a number of exchanges of two tiles from among the 6 indicated in Figure \ref{fig:T32flips}.  If a tile set has this local move property with respect to the collection of all tileable regions homeomorphic to a disk, then the local moves generate $J$.   For instance, the 6 spherical pictures determined by the 2-flips in Figure \ref{fig:T32flips} generate $J$ in the case of $T_3$.  See \cite{korn} for other examples of tile sets having a local move property, as well as examples of tile sets that do not have a local move property. We note that Sheffield also proves in \cite{sheffield} the existence of a linear time algorithm for deciding whether a given simply connected region is tileable by $T_n$.

\begin{figure}[h]
\centering
\includegraphics[width=2.5in]{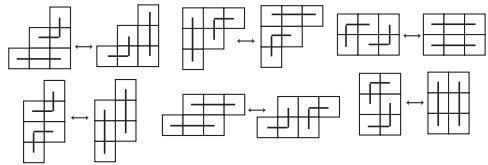}
\caption{These 6 2-flips generate $J$ for the tile set $T_3$.}\label{fig:T32flips}
\end{figure}

3. {\em Lozenges.}  We may construct tile boundary groups for tile sets in the triangular lattice from the free group on three generators, $F = F(x,y,z)$.  For instance, the tile boundary group for the tile set $L=\{l_1, l_2, l_3\}$ consisting of the three lozenges pictured in Figure \ref{fig:lozenge}  has presentation  $$\mathcal{L} = \langle x,y,z~|~r_1 = [x,y], r_2 = [y,z], r_3 = [z,x]\rangle.$$

Theorem \ref{th:1} need not apply to tile sets in the triangular lattice because boundary words of such tiles need not live in $[F,F]$ (consider a triangular tile, for instance).  However, the theorem does apply to tile sets whose boundary words do live in $[F,F]$ such as the lozenges.

The tile boundary group for lozenges is isomorphic to $\mathbb{Z}^3$. These lozenges were studied in \cite{thurston} as the dominoes were, and $\pi_2(X_{\mathcal{L}})$ may be understood by considering the universal cover of $X_{\mathcal{L}}$, $\widetilde{X_{\mathcal{L}}}$, since $\pi_2(X_{\mathcal{L}})$ is isomorphic to $H_2(\widetilde{X_{\mathcal{L}}})$.  Here $\widetilde{X_{\mathcal{L}}}$ is the 2-skeleton of a cubical tessellation of $\mathbb{R}^3$.  By an examination of the 2-dimensional cellular cycles of the universal cover, one checks that the picture in Figure \ref{fig:lozenge} generates $\pi_2(X_{\mathcal{L}})$.  This picture is equivalent to one obtained from the difference of two tilings of a hexagonal region consisting of 6 triangles in the lattice, so  $\pi_2(X_{\mathcal{L}}) = J$ in this case.  Also, since this picture is homologically trivial, $X_{\mathcal{L}}$ is Cockcroft and the tile counting group for the lozenges is isomorphic to $H_2(X_{\mathcal{L}}) = \mathbb{Z}^3$.

\begin{figure}[h]
\centering
\includegraphics[width=2.7in]{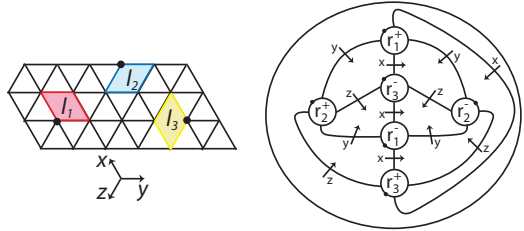}
\caption{Tiling the triangular lattice with lozenges; $\pi_2(X_{\mathcal{L}})$ is generated by a single picture.}\label{fig:lozenge}
\end{figure}

More generally, Bogley shows in \cite{bogley} that the 2-complex associated to $$\mathcal{P} = \langle x, y, z ~|~ [x^a, y], [y^b, z], [z^c,x]\rangle$$ 

\noindent is Cockcroft for positive integers $a, b, c$.  This gives an infinite family of tile sets consisting of 3 elongated lozenges in the triangular lattice for which the tile counting group is $H_2(X_{\mathcal{P}}) = \mathbb{Z}^3$.  We remark that the topological machinery employed in this example is not necessary, and that one may determine the tile counting group directly.


\section{Pictures and Tile Invariants}

Given tile boundary group presentation $$\mathcal{P} = \langle x, y ~|~ r_1, \ldots, r_n \rangle,$$ 

\noindent suppose $\{s_1, s_2, \ldots, s_k\}$ is a set of words in $F(x,y)$ such that each $r_i$ is in the normal subgroup $S$ of $F(x,y)$ generated by the $s_i$.  That is, each $r_i$ is trivial in the group $H = F/S$.  The group $H$ is a quotient of $G_T$ and has presentation $$\mathcal{Q} = \langle x, y ~|~ s_1, \ldots, s_k\rangle.$$
Let $Y$ be the 2-complex modeled on $\mathcal{Q}$.

Since each $r_i$ is trivial in $H$, there is a disk picture $\mathbb{B}_{r_i}$ over $\mathcal{Q}$ having boundary label identically equal to $r_i$, by van Kampen's Lemma.  Given a choice $\{\mathbb{B}_{r_1},\mathbb{B}_{r_2}, \ldots, \mathbb{B}_{r_n}\}$ of such based pictures we may transform a spherical picture over $\mathcal{P}$ to a spherical picture over $\mathcal{Q}$.    In particular, say $\mathbb{P}$ is a spherical picture over $\mathcal{P}$ with disks $\Delta_1, \cdots \Delta_m$, and disk labels $w_i^{\epsilon_i}$ where $w_i \in \{r_1, \cdots r_n\}$ and $\epsilon_i = \pm 1$.  Replace each disk $\Delta_i$ in $\mathbb{P}$ with the disk picture $\mathbb{B}_{w_i}^{\epsilon_i}$.  The resulting spherical picture, call it $\overline{\mathbb{P}}$, gives an element of $\pi_2(Y)$.

If $\pi_2(Y)$ is understood, this map can tell us information about $\pi_2(X_{\mathcal{P}})$, which in turn may give us tile invariants.

This technique of passing to a quotient to arrive at tile invariants was applied by Conway and Lagarias in \cite{conway} and by Pak and Muchnik in \cite{muchnik}.  Their work was couched in terms of winding numbers of paths in the 1-skeleton of the universal covering of the tile complex.  Here we revisit their arguments from the point of view of pictures, and we derive the tile invariants from the fact that the quotient spaces considered are Cockcroft.

\begin{lemma}\label{lemma:dyck} Suppose $Y$ is the 2-complex modeled on $\mathcal{P} = \langle x, y ~|~ x^a, y^b, (xy)^c\rangle$ where $\frac{1}{a}+\frac{1}{b}+\frac{1}{c} \leq 1$.  Then $Y$ is Cockcroft.
\end{lemma}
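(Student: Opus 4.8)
The plan is to recognize that $\mathcal{P}$ is the standard presentation of the (von Dyck) triangle group $G = D(a,b,c)$, and to exploit the fact that the inequality $\frac1a + \frac1b + \frac1c \le 1$ forces $G$ to act properly discontinuously and cocompactly on a contractible plane $X$ — the Euclidean plane when equality holds, the hyperbolic plane otherwise — tessellated by triangles with angles $\pi/a, \pi/b, \pi/c$. Since $Y$ is Cockcroft exactly when ${\rm exp}_r(\mathbb{P}) = 0$ for every relator $r$ and every spherical picture $\mathbb{P}$, and since ${\rm exp}_r$ is invariant under the $G$-action on $\pi_2(Y)$ (surrounding a picture by a loop of arcs changes no disk count), it suffices to exhibit a set of $\mathbb{Z}G$-module generators of $\pi_2(Y)$ on which the Hurewicz map vanishes.

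First I would describe the universal cover $\tilde Y$ concretely from the tessellation: its $1$-skeleton is the Cayley graph of $G$ with respect to $\{x,y\}$, which embeds in $X$ with one vertex in each ``white'' triangle, and its $2$-cells (one lift of each relator disk for every element of $G$) fill the cycles around the vertices of $X$ — the $a$-cycles of $x$-edges around the order-$a$ vertices, the $b$-cycles of $y$-edges around the order-$b$ vertices, and the $2c$-cycles of alternating $x,y$-edges around the order-$c$ vertices. Because $x$ has order exactly $a$ in $G$ (and similarly for $y$ and $xy$), exactly $a$ of the cells over $x^a$ attach to each such $a$-cycle, and likewise for the other two relators. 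The basic \emph{proper-power} pictures are the differences $e - e'$ of two cells sharing a common boundary cycle; each is a genuine $2$-cycle (the shared boundary cancels) and its Hurewicz image is $(+1)c^2_r + (-1)c^2_r = 0$. The crux will be to show these differences generate $\pi_2(Y) = H_2(\tilde Y)$ as a module.

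To prove this, I would introduce the collapsed complex $\bar Y$ obtained from $\tilde Y$ by identifying, over each vertex cycle, all cells sharing that cycle to a single polygonal face, so that $\bar Y$ is the planar cell structure on $X$ whose faces are $a$-gons, $b$-gons and $2c$-gons around the three vertex types. The collapse induces a surjective chain map $p\colon C_*(\tilde Y) \to C_*(\bar Y)$ that is the identity in degrees $0$ and $1$ and whose kernel $K$ is concentrated in degree $2$, spanned precisely by the proper-power differences $e - e'$; since these lie in $\ker \partial_2$, the subcomplex $K$ has zero differential, so $H_2(K) = K$. The short exact sequence $0 \to K \to C_*(\tilde Y) \to C_*(\bar Y) \to 0$ then gives the exact piece
$$0 = H_3(\bar Y) \longrightarrow K \longrightarrow H_2(\tilde Y) \longrightarrow H_2(\bar Y),$$
so $\pi_2(Y) = H_2(\tilde Y)$ is generated by the image of $K$ as soon as $H_2(\bar Y) = 0$. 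Combined with the vanishing Hurewicz computation above, this yields $h \equiv 0$, i.e., $Y$ is Cockcroft.

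The main obstacle is the single input $H_2(\bar Y) = 0$, and this is exactly where the hypothesis $\frac1a+\frac1b+\frac1c \le 1$ enters. I would verify that $\bar Y$ is a simply connected $2$-manifold — the links of its vertices close up into circles by the planar combinatorics of the tessellation, and simple connectivity is inherited from $\tilde Y$ because collapsing $2$-cells cannot create $\pi_1$ — so that $\bar Y$ is homeomorphic to $X \cong \mathbb{R}^2$ and hence contractible. This is the only step sensitive to the curvature condition: were $\frac1a+\frac1b+\frac1c > 1$, the complex $\bar Y$ would be a $2$-sphere with $H_2(\bar Y) = \mathbb{Z}$, producing a global spherical picture of nonzero Hurewicz image and destroying the Cockcroft property, so the inequality is genuinely necessary and not merely convenient.
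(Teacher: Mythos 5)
Your proposal is correct in outline, but it takes a genuinely different route from the paper. The paper's proof never leaves the world of pictures: it takes an arbitrary connected spherical picture over $\mathcal{Q}$, shrinks disks to vertices, assigns angles $2\pi/a$, $2\pi/b$, $\pi/c$ to the corners, and shows that vertices have zero curvature while, in the absence of a dipole, every face has curvature at most $2\pi(1-2k+k(1/a+1/b+1/c)) \leq 0$; this contradicts the Gauss--Bonnet count $\sum \gamma(v) + \sum \gamma(f) = 2\pi\chi(\mathbb{S}^2) = 4\pi$, so every spherical picture contains a dipole, and splitting dipoles off inductively expresses the picture as a sum of two-disk pictures with cancelling exponents. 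Your argument instead identifies the group as the von Dyck triangle group, invokes its properly discontinuous cocompact action on $\mathbb{E}^2$ or $\mathbb{H}^2$, describes the universal cover $\tilde Y$ explicitly, and collapses the redundant proper-power cells to the planar tessellation $\bar Y$, extracting the generation statement from the long exact sequence of $0 \to K \to C_*(\tilde Y) \to C_*(\bar Y) \to 0$ together with $H_2(\bar Y)=0$. Both proofs land on the same generators of $\pi_2$ (your proper-power differences are exactly the paper's dipole pictures, Figure \ref{fig:pi2dyck}), and your chain-level bookkeeping is sound: $K$ is concentrated in degree $2$, consists of cycles, and dies under the covering projection that computes the Hurewicz map. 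What the two approaches buy is different. Yours makes the role of the hypothesis geometrically transparent (Euclidean/hyperbolic versus spherical, and your necessity remark is correct), and it parallels the Thurston-style universal-cover analysis the paper itself uses for dominoes and lozenges. The paper's curvature argument, by contrast, is entirely self-contained: it never needs to know that $x$ has order exactly $a$ in the presented group, that the group is infinite, or that the presentation is geometric. Those facts --- which your description of $\tilde Y$ (exactly $a$ cells per $a$-cycle, Cayley graph embedded with the stated faces, links of vertices being circles) depends on at every step --- are classical, but they amount to Poincar\'e's polygon theorem, a result whose proof is at least as heavy as the lemma itself; so your argument is best read as a reduction to cited triangle-group theory rather than a from-scratch proof, and you should state that citation explicitly rather than gesture at ``planar combinatorics.''
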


This result can be found in the literature (see, for instance, \cite[p. 174]{thebook}) but for the reader's convenience we work through the proof at the end of the section.

\begin{figure}[h]
\centering
\centerline{\includegraphics[width=2.8in]{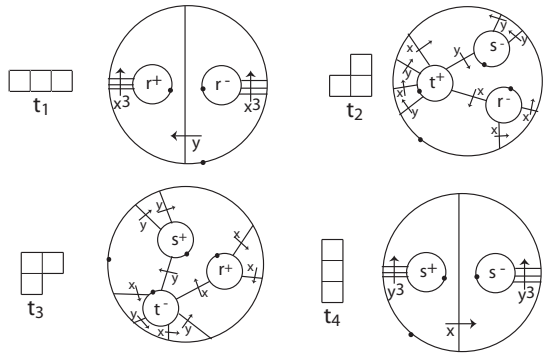}}
\caption{The tiles in $T_3$ determine pictures over $\mathcal{Q} = \langle x, y ~| x^3, y^3, (xy)^3 \rangle$.}\label{fig:T3quotient}
\end{figure}

\begin{theorem}\label{th:T3} {\rm (Conway and Lagarias)} Suppose $\Gamma$ is a region homeomorphic to a disk. For any tiling $\alpha$ of $\Gamma$ by the set of trominoes $T_3$, $b_3(\alpha) - b_2(\alpha) = c(\Gamma)$.
\end{theorem}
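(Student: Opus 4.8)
The plan is to use the quotient construction of this section to pass from the tile boundary presentation $\mathcal{P}_3$ to the triangle group presentation $\mathcal{Q} = \langle x, y \mid x^3, y^3, (xy)^3\rangle$ and then to exploit the fact that the associated complex $Y$ is Cockcroft. First I would check that each of the four relators of $\mathcal{P}_3$ is trivial in $H = F/S$, where $S$ is the normal closure of $\{x^3, y^3, (xy)^3\}$: for the two straight trominoes this is immediate (the relator $[y,x^3]$ collapses since $x^3 = 1$, and $[y^3,x]$ since $y^3 = 1$), and for the two bent trominoes one verifies that $(yx)^2 y^{-2} x^{-2}$ and $y^2 x^2 (y^{-1}x^{-1})^2$ become trivial using $x^3 = y^3 = 1$ together with $(xy)^3 = 1$ (equivalently $(yx)^3 = 1$). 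This makes the substitution $\mathbb{P}\mapsto\overline{\mathbb{P}}$ available. Since $\tfrac{1}{3}+\tfrac{1}{3}+\tfrac{1}{3} = 1 \leq 1$, Lemma \ref{lemma:dyck} applies, so $Y$ is Cockcroft and the Hurewicz map $h:\pi_2(Y)\to H_2(Y)$ is trivial.

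Next I would fix, once and for all, disk pictures $\mathbb{B}_{r_1},\ldots,\mathbb{B}_{r_4}$ over $\mathcal{Q}$ with $\partial\mathbb{B}_{r_i}$ identically $r_i$, namely the pictures displayed in Figure \ref{fig:T3quotient}. Given two tilings $\alpha$ and $\beta$ of $\Gamma$, the difference $\mathbb{B}_\alpha\cdot\mathbb{B}_\beta^{-1}$ is a spherical picture over $\mathcal{P}_3$ whose net number of $r_i$-disks is $b_i(\alpha)-b_i(\beta)$. Replacing each $r_i^{+}$-disk by $\mathbb{B}_{r_i}$ and each $r_i^{-}$-disk by $\mathbb{B}_{r_i}^{-1}$ produces a spherical picture $\overline{\mathbb{P}}$ over $\mathcal{Q}$ whose net count of $s$-disks is therefore $\sum_{i=1}^4 (b_i(\alpha)-b_i(\beta))\,{\rm exp}_s(\mathbb{B}_{r_i})$ for each $s\in\{x^3, y^3, (xy)^3\}$. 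Because $Y$ is Cockcroft, $h([\overline{\mathbb{P}}]) = 0$, so every one of these counts vanishes.

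The remaining step is to read the exponent sums ${\rm exp}_s(\mathbb{B}_{r_i})$ off Figure \ref{fig:T3quotient}. I expect each straight tromino to be fillable by a cancelling pair of $x^3$- (resp. $y^3$-) disks, so that ${\rm exp}_s(\mathbb{B}_{r_1}) = {\rm exp}_s(\mathbb{B}_{r_4}) = 0$ for every $s$, while each bent tromino requires a single $(xy)^3$-disk, the two occurring with opposite signs, giving ${\rm exp}_{(xy)^3}(\mathbb{B}_{r_2}) = 1$ and ${\rm exp}_{(xy)^3}(\mathbb{B}_{r_3}) = -1$ after fixing orientations. A useful structural check is that since every $r_i$ lies in $[F,F]$, the $2$-chain carried by $\mathbb{B}_{r_i}$ has trivial boundary in $C_1(Y)$ and hence lies in $H_2(Y) \cong \mathbb{Z}\langle c^2_{(xy)^3} - c^2_{x^3} - c^2_{y^3}\rangle$; thus each triple of exponent sums is an integer multiple of $(-1,-1,1)$ and is pinned down by its single $(xy)^3$-count. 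Feeding these values into the vanishing relation collapses it to $(b_2(\alpha)-b_2(\beta)) - (b_3(\alpha)-b_3(\beta)) = 0$, that is, $b_3(\alpha) - b_2(\alpha) = b_3(\beta) - b_2(\beta)$. As $\alpha$ and $\beta$ were arbitrary tilings of $\Gamma$, the quantity $b_3(\alpha) - b_2(\alpha)$ depends only on $\Gamma$, and we set $c(\Gamma)$ equal to this common value.

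The hard part will be the explicit identification of the fillings $\mathbb{B}_{r_i}$ and the verification that the bent trominoes contribute $\pm 1$ to the $(xy)^3$-count while the straight ones contribute $0$ — in other words, confirming that the surviving Cockcroft relation isolates exactly $b_3 - b_2$ rather than some other combination of the $b_i$. The homological reduction just noted, that each filling represents an integer multiple of the generator of $H_2(Y) \cong \mathbb{Z}$, is what keeps this bookkeeping under control, since it reduces the entire computation to determining one integer per tile.
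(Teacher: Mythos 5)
Your proposal is correct and follows essentially the same route as the paper: pass to the quotient presentation $\mathcal{Q} = \langle x,y \mid x^3, y^3, (xy)^3\rangle$, invoke Lemma \ref{lemma:dyck} to conclude that $Y$ is Cockcroft, and extract the invariant from the vanishing $(xy)^3$-exponent sum of the transformed spherical picture $\overline{\mathbb{B}_\alpha\cdot\mathbb{B}_\beta^{-1}}$. Your added observation that $H_2(Y)\cong\mathbb{Z}$ forces each filling's exponent triple to be an integer multiple of $(-1,-1,1)$ is a clean way to pin down the counts that the paper simply reads off Figure \ref{fig:T3quotient}, but it is a bookkeeping refinement of the same argument, not a different one.
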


\begin{proof} Consider the presentation $\mathcal{Q} = \langle x, y ~| r=x^3, s=y^3, t=(xy)^3 \rangle$ 
and let $Y$ denote the 2-complex modeled on $\mathcal{Q}$.  By Lemma \ref{lemma:dyck}, $Y$ is Cockcroft.  Moreover, each relator in $\mathcal{P}_3$ determines the trivial element in the group $H$ presented by $\mathcal{Q}$ as demonstrated in Figure \ref{fig:T3quotient}.

Using these based pictures, any two tilings $\alpha$ and $\beta$ of a region $\Gamma$ by $T_3$ determine an element $[\mathbb{B}_\alpha \cdot \mathbb{B}_{\beta}^{-1}]$ in $\pi_2(X_{\mathcal{P}_3})$ whose image $[\overline{\mathbb{B}_\alpha \cdot \mathbb{B}_{\beta}^{-1}}]$ in $\pi_2(Y)$ must be homologically trivial since $Y$ is Cockcroft.   That is, the exponent sum of each relator in $\overline{\mathbb{B}_\alpha \cdot \mathbb{B}_{\beta}^{-1}}$ must be zero.
It follows that for the relator $t$,
$$0 = {\rm exp}_{t}(\overline{\mathbb{B}_\alpha \cdot \mathbb{B}_{\beta}^{-1}}) = \big(b_2(\alpha)-b_3(\alpha)\big)-\big(b_2(\beta)-b_3(\beta)\big).$$  
Thus, for any two tilings $\alpha$ and $\beta$ of $\Gamma$ we have $b_2(\alpha)-b_3(\alpha) = b_2(\beta)-b_3(\beta)$.
\end{proof}

We remark that the relators $r_3$ and $r_4$ in the presentation of the tile boundary group for $T_3$ are consequences of the first two (see Figure \ref{fig:T3reduced}). That is, $r_3$ and $r_4$ are trivial in the group presented by $$\mathcal {W}=\langle x, y~|~[y,x^3],(yx)^2y^{-2}x^{-2}\rangle.$$  It would be interesting to know whether the 2-complex $X_{\mathcal{W}}$ associated to this presentation is Cockcroft.  If this is the case, then we have a proof of Theorem \ref{th:T3} that does not involve the quotient group considered in \cite{conway}.  Indeed, if $\alpha$ and $\beta$ are two tilings of a region $\Gamma$ by $T_3$, the spherical picture $\mathbb{B}_\alpha \cdot \mathbb{B}_{\beta}^{-1}$ determines the element $[\overline{\mathbb{B}_\alpha \cdot \mathbb{B}_{\beta}^{-1}}]$ in $\pi_2(X_{\mathcal{W}})$, by using the subpictures of Figure \ref{fig:T3reduced} and one checks that $${\rm exp}_{r_2}([\overline{\mathbb{B}_\alpha \cdot \mathbb{B}_{\beta}^{-1}}]) = (b_2(\alpha)-b_3(\alpha))-(b_2(\beta)-b_3(\beta)).$$

\begin{figure}[h]
\centering
\includegraphics[width=3in]{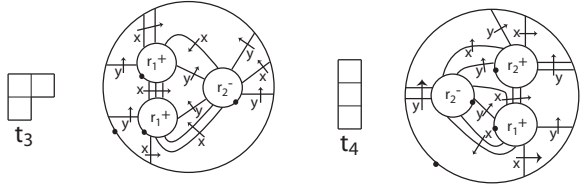}
\caption{Tiles $t_3$ and $t_4$ are consequences of $t_1$ and $t_2$.}\label{fig:T3reduced}
\end{figure}

\begin{theorem}\label{th:T4} {\rm (Pak and Muchnik)} Suppose $\Gamma$ is a region homeomorphic to a disk.  For any tiling $\alpha$ of $\Gamma$ by the set of tetrominoes $T_4$, $b_2(\alpha)+b_4(\alpha)-b_5(\alpha)-b_7(\alpha) = c(\Gamma)$.
\end{theorem}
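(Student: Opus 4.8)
The plan is to mirror the proof of Theorem \ref{th:T3}: pass from $\mathcal{P}_4$ to a quotient presentation whose $2$-complex is Cockcroft, transport the spherical picture $\mathbb{B}_\alpha \cdot \mathbb{B}_\beta^{-1}$ into the quotient, and read the invariant off from the vanishing of a relator's exponent sum. The candidate quotient is the Euclidean triangle-group presentation $\mathcal{Q} = \langle x, y \mid r = x^4,\ s = y^4,\ t = (xy)^2\rangle$, which satisfies $\frac14 + \frac14 + \frac12 = 1$, so $Y = X_{\mathcal{Q}}$ is Cockcroft by Lemma \ref{lemma:dyck}. This is the natural analogue of the choice $\langle x,y\mid x^3,y^3,(xy)^3\rangle$ used for $T_3$, the difference being that the ribbon tetrominoes sit at the Euclidean point $(4,4,2)$ rather than $(3,3,3)$. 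Throughout I order $t_1,\dots,t_8$ by their ribbon-step words as in $\mathcal{P}_4$, so $t_2,t_4$ are the tiles $RRU, RUU$ and $t_5,t_7$ are $URR, UUR$.

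First I would check that each of the eight relators $r_1,\dots,r_8$ of $\mathcal{P}_4$ is trivial in $H = F/S$, so that based disk pictures $\mathbb{B}_{r_i}$ over $\mathcal{Q}$ exist by van Kampen's Lemma. The relators $[y,x^4]$ and $[y^4,x]$ die immediately from $x^4$ and $y^4$. For the remaining six it is convenient to use the faithful realization of $H$ as the wallpaper group $\mathrm{p}4$, with $x$ and $y$ the quarter-turns about $(0,0)$ and $(1,0)$ (so $xy$ is the half-turn about $(\frac12,\frac12)$); a short isometry computation then shows each $r_i$ represents the identity. Having confirmed triviality, I would fix explicit pictures $\mathbb{B}_{r_i}$ over $\mathcal{Q}$ in the style of Figure \ref{fig:T3quotient}.

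Given tilings $\alpha$ and $\beta$ of $\Gamma$, the spherical picture $\mathbb{B}_\alpha \cdot \mathbb{B}_\beta^{-1}$ over $\mathcal{P}_4$ transports to a spherical picture $\overline{\mathbb{B}_\alpha \cdot \mathbb{B}_\beta^{-1}}$ over $\mathcal{Q}$, which is homologically trivial because $Y$ is Cockcroft; hence ${\rm exp}_t$ of this picture vanishes. Since the exponent of $t$ is additive over disks, $0 = \sum_{i=1}^{8} {\rm exp}_t(\mathbb{B}_{r_i})\,(b_i(\alpha) - b_i(\beta))$, so $\sum_i {\rm exp}_t(\mathbb{B}_{r_i})\, b_i(\alpha)$ is a tile invariant. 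A useful bookkeeping point is that the Cockcroft property makes the whole exponent vector $({\rm exp}_r,{\rm exp}_s,{\rm exp}_t)(\mathbb{B}_{r_i})$ independent of the chosen disk picture: it is a $2$-cycle, hence lies in $H_2(Y) = \ker\partial_2 = \mathbb{Z}\cdot(1,1,-2)$, so each tile acquires a well-defined integer weight $\lambda_i = {\rm exp}_r(\mathbb{B}_{r_i}) = {\rm exp}_s(\mathbb{B}_{r_i})$ with ${\rm exp}_t(\mathbb{B}_{r_i}) = -2\lambda_i$. It remains to evaluate these weights and show the resulting combination is $b_2 + b_4 - b_5 - b_7$ up to a nonzero scalar.

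The main obstacle is exactly this last computation. I expect $\lambda_i$ to vanish for the tiles whose first and last ribbon steps agree ($t_1, t_3, t_6, t_8$) and to equal $\pm 1$ according to the sense of the turn for the four tiles whose first and last steps differ ($t_2,t_4$ against $t_5,t_7$), yielding $b_2+b_4-b_5-b_7$. The delicate part is that the vanishing for, say, $t_3$ does \emph{not} follow from the relator $(xy)^2$ being unnecessary---in fact $r_3$ is already nontrivial in $\mathbb{Z}_4 \ast \mathbb{Z}_4$, so $t$-disks must occur---but only from the \emph{net} count being zero. I would pin these counts down by developing each boundary word $r_i$ into the $\mathrm{p}4$ tiling and computing the signed number of order-two rotation centers it encircles; this is precisely the winding-number calculation of \cite{muchnik}, recast in the language of pictures, and it is where the asymmetry between the four turning tiles and the four straight tiles must be extracted by hand.
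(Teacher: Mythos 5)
Your proposal is correct and is essentially the paper's own proof: the paper uses the identical quotient $\mathcal{Q}=\langle x, y \mid x^4,\ y^4,\ (xy)^2\rangle$, invokes Lemma \ref{lemma:dyck} for the Cockcroft property, transports $\mathbb{B}_\alpha\cdot\mathbb{B}_\beta^{-1}$ to a spherical picture over $\mathcal{Q}$, and reads the invariant from a vanishing exponent sum, the only differences being that the paper exhibits the disk pictures $\mathbb{B}_{r_i}$ explicitly (Figure \ref{fig:T4quotient}) rather than deriving their weights by your $\mathrm{p}4$ winding-number computation, and that it reads off ${\rm exp}_r$ rather than ${\rm exp}_t$ --- an equivalent choice, by your own observation that the exponent vectors lie in $\ker\partial_2=\mathbb{Z}\cdot(1,1,-2)$. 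The weights you predict (equal and opposite values on $t_2,t_4$ versus $t_5,t_7$, zero on the tiles whose first and last steps agree) are exactly those the paper reads from its figure, so the argument closes as you outline.
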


\newpage
\begin{proof} Consider the presentation $$\mathcal{Q}=\langle x, y ~|~ r=x^4, s=y^4, t=(xy)^2\rangle$$
and let $Y$ be the 2-complex modeled on $\mathcal{Q}$.  By Lemma \ref{lemma:dyck}, $Y$ is Cockcroft.  Moreover, each relator in ${\mathcal{P}}_4$ determines the trivial element in the group $H$ presented by $\mathcal{Q}$ as demonstrated in Figure \ref{fig:T4quotient}.  The result then follows by the same argument as in the proof of Theorem \ref{th:T3}, making use of the relator $r$.  In this case, observe that 
\begin{align*}
{\rm exp}_{r}(\overline{\mathbb{B}_\alpha \cdot \mathbb{B}_{\beta}^{-1}}) = &\big(-b_2(\alpha)-b_4(\alpha) + b_5(\alpha)+b_7(\alpha)\big)\\
&-\big(-b_2(\beta)-b_4(\beta) + b_5(\beta)+b_7(\beta) \big).\\
  \end{align*}  
  \vskip-.2in \noindent Since this exponent sum must be 0, it follows that for any two tilings $\alpha$ and $\beta$ of $\Gamma$ we have $b_2(\alpha)+b_4(\alpha)-b_5(\alpha)-b_7(\alpha) = b_2(\beta)+b_4(\beta)-b_5(\beta)-b_7(\beta)$.\end{proof}
\begin{figure}[h]
\centering
\centerline{\includegraphics[width=3.5in]{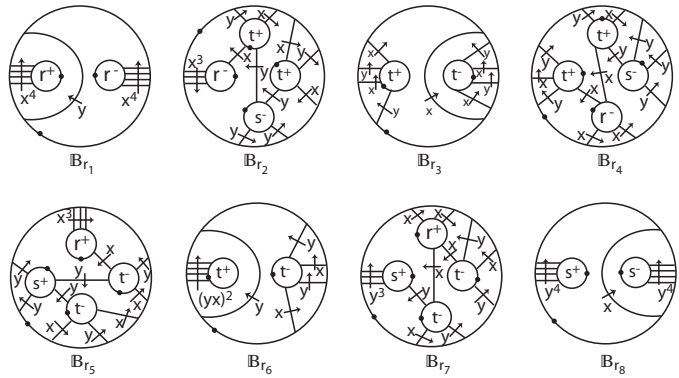}}
\caption{Tile $t_i \in T_4$ determines picture $B_{r_i}$ over $\mathcal{Q}=\langle x, y ~|~ x^4, y^4, (xy)^2\rangle$.}\label{fig:T4quotient}
\end{figure}

We may perhaps use Lemma \ref{lemma:dyck} to reverse engineer other interesting tile sets.  Consider the group presented by $$\mathcal{Q} = \langle x, y ~|~ x^3, y^6, (xy)^2 \rangle.$$  One can check that $\mathcal{Q}$ presents a quotient of the tile boundary group associated with the tiles in Figure \ref{fig:6ominoes}.  Using techniques from this section it is straight forward to check that for any tiling $\alpha$ of a region $\Gamma$ by this tile set, $b_2(\alpha) - b_3(\alpha)$ is constant.

\begin{figure}[h]
\centering
\centerline{\includegraphics[width=2.1in]{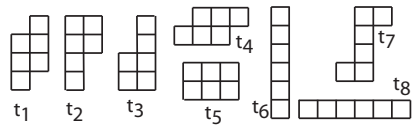}}
\caption{A tile set consisting of 6-ominoes}\label{fig:6ominoes}
\end{figure}

{\em Proof of Lemma \ref{lemma:dyck}}.  We first argue that any connected spherical picture over $\mathcal{P}$ must have a local configuration of one of four types in Figure \ref{fig:dipoles}.  We call such a configuration a dipole.

\begin{figure}[h]
\centering
\centerline{\includegraphics[width=1.3in]{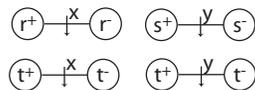}}
\caption{Local configurations in a picture over $\mathcal{Q} = \langle x, y ~| r=x^a, s=y^b, t=(xy)^c \rangle$}\label{fig:dipoles}
\end{figure}

Suppose we have a connected spherical picture $\mathbb{P}$ over the presentation $\mathcal{P}$, and we shrink each disk to a vertex.  What remains is a labeled graph on the sphere.  Each edge gets labeled with $x$ or $y$; each vertex by $r=x^a$, $s=y^b$, or $t=(xy)^c$.   One can define an angle $\alpha$ at each corner of the graph and subsequently a curvature function $\gamma$ on the vertices and faces of this graph as follows: $\gamma(v) = 2\pi - \sum \alpha_i$ and $\gamma(f) = 2\pi - \sum(\pi - \alpha_i)$ where in each case the sum runs over all angles about the vertex or over all interior angles of the face, respectively.  Let $V$, $E$, and $F$ denote the number of vertices, edges, and faces in the resulting graph.  Since the graph will have twice as many angles as edges one can check that $$\sum_{{\rm all~}v}\gamma(v) + \sum_{{\rm all~}f}\gamma(f) = 2\pi(V-E+F)= 2\pi \chi(\mathbb{S}^2) = 4\pi.$$  It follows that some face or vertex of the graph must have positive curvature.

Assign an angle to each corner at each vertex according to this rule: If the corner consists of two $x$ edges, assign the angle $2\pi/a$.  If the corner consists of two $y$ edges, assign the angle $2\pi/b$, and if the corner consists of one $x$ edge and one $y$ edge, assign the angle $\pi/c$.  With this angle assignment, the curvature at each vertex in the graph obtained from $\mathbb{P}$ is 0.

Now, we assume $\mathbb{P}$ has no dipole and we argue that the curvature of any face will be less than or equal to 0.  To see this, note that any vertex labeled $r$ or $s$ can only have $t$ vertices as neighbors, and vice versa.  It follows that any face in the graph will be formed by $4k$ vertices for some $k \geq 1$ having vertex label sequence $(rtst)(rtst)\cdots(rtst)$.  The curvature of such a face $F$ can be summed by grouping the corners by vertex label.  This face will have $k$ vertices labeled by $r$, $k$ labeled by $s$ and $2k$ labeled by $t$, so

\begin{align*}
\gamma(f)&= 2\pi - k(\pi-2\pi/a) - k(\pi-2\pi/b)-2k(\pi-\pi/c)\\
&=2\pi(1-2k+k(1/a+1/b+1/c))\\
&\leq 2\pi(1-k) ~~~{\rm since}~ 1/a+1/b+1/c\leq 1\\
&\leq 0   ~~~{\rm since}~ k \geq 1.
\end{align*}

But this contradicts the fact that some disk or region must have positive curvature, so $\mathbb{P}$ must have a dipole.  By a series of bridge moves the disks in the dipole can be split off from $\mathbb{P}$.  In this way $\mathbb{P}$ is equivalent to a finite sum of spherical pictures, each one consisting of two disks with the same relator label but oppositely oriented.  Thus, the image of $[\mathbb{P}]$ under the Hurewicz map is trivial, and $Y$ is Cockcroft.  \qed

In fact, this argument tells us a bit more, namely that $\pi_2(Y)$ is generated as a left $\mathbb{Z}\pi_1(Y)$-module by the spherical pictures in Figure \ref{fig:pi2dyck}.
\begin{figure}[h]
\centering
\centerline{\includegraphics[width=2in]{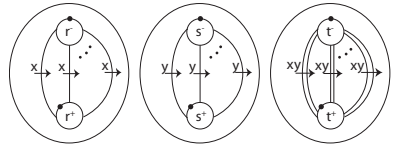}}
\caption{Generators of $\pi_2(Y_{\mathcal{Q}})$, where $\mathcal{Q} = \langle x, y ~| r=x^a, s=y^b, t=(xy)^c \rangle$ and $1/a+1/b+1/c\leq 1$.}\label{fig:pi2dyck}
\end{figure}


\section{A Tiling Question}

In \cite[Theorem 7.1]{pak} Pak determined which $a \times b$ rectangles could be tiled by the subset $S = \{t_2, t_3, t_5, t_8\}$ of $T_4$ given in Figure \ref{fig:S}.  In the spirit of tiling questions involving mutilated checkerboards, we determine which modified rectangles $M(a,b)$ (obtained by removing the upper left and lower right squares of an $a$ by $b$ rectangle) can be tiled by $S$.

\begin{figure}[h]
\centering
\centerline{\includegraphics{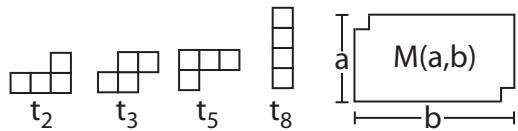}}
\caption{The tile set $S$}\label{fig:S}
\end{figure}

The key to Pak's argument, and to ours, is that if a region $\Gamma$ can be tiled by $T_4$, then the number of tiles in any tiling of $\Gamma$ that come from $S$ must be constant, modulo 2.  This invariant, called the {\em height invariant} in \cite{pak}, can be stated as follows:  if $\Gamma$ can be tiled by $T_4$ and $\alpha$ is a tiling of $\Gamma$ by $T_4$ then
\begin{equation*}
b_2(\alpha) + b_3(\alpha) + b_5(\alpha) + b_8(\alpha) \equiv h(\Gamma)  ~({\rm mod~} 2)
\end{equation*}
where $h(\Gamma)$ is a constant depending only on the region $\Gamma$.  The height invariant can be derived by combining the invariant of Theorem \ref{th:T4} with an invariant that comes from a coloring argument. We work through this derivation following the proof of Lemma \ref{lemma:th4}.  Now we wish to emphasize its usefulness.

In the context of our tiling question and the set $S$, we say that $T_4$ {\em oddly tiles} a region $\Gamma$ if $T_4$ tiles $\Gamma$ in such a way that an odd number of the tiles used come from the complementary set $T_4 \setminus S$.  Figure \ref{fig:mutex} shows three modified rectangles tiled by $S$, and two regions oddly tiled by $T_4$.

The following ``odd tiling'' lemma gives us a constructive tool for proving the non-tileability of a region $\Gamma$ by the set $S$.

\smallskip

\begin{lemma} \label{lemma:oddtile}  {\em If $T_4$ oddly tiles $\Gamma$ then $S$ does not tile $\Gamma$.}
\end{lemma}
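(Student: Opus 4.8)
The plan is to argue by contradiction, reducing the whole statement to a parity count that leans on just two facts already available to us: the area invariant and the height invariant. The key observation is that the height invariant pins down the parity of the number of $S$-tiles appearing in \emph{any} $T_4$-tiling of $\Gamma$, and this in turn pins down the parity of the number of tiles drawn from the complement $T_4 \setminus S$; an $S$-tiling would use none of the latter, so if some $T_4$-tiling uses an odd number of them, no $S$-tiling can exist.

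First I would record that every tile in $T_4$ has area $4$, so by the area invariant any two tilings of $\Gamma$ by $T_4$ use the same number $N = \mathrm{area}(\Gamma)/4$ of tiles. Next, for a $T_4$-tiling $\alpha$ write $s(\alpha) = b_2(\alpha)+b_3(\alpha)+b_5(\alpha)+b_8(\alpha)$ for the number of tiles of $\alpha$ belonging to $S = \{t_2,t_3,t_5,t_8\}$; the height invariant gives $s(\alpha) \equiv h(\Gamma) \pmod 2$, a constant independent of $\alpha$. The number of tiles of $\alpha$ belonging to $T_4 \setminus S$ is then $N - s(\alpha)$, whose parity $N - h(\Gamma) \pmod 2$ is likewise independent of the chosen $T_4$-tiling.

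Now suppose $T_4$ oddly tiles $\Gamma$, so that there is a $T_4$-tiling using an odd number of tiles from $T_4 \setminus S$; by the previous paragraph this forces $N - h(\Gamma) \equiv 1 \pmod 2$. If $S$ also tiled $\Gamma$, that tiling would be a $T_4$-tiling (since $S \subseteq T_4$) using zero tiles from $T_4 \setminus S$, forcing $N - h(\Gamma) \equiv 0 \pmod 2$, a contradiction. Hence $S$ cannot tile $\Gamma$.

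I do not expect the lemma itself to present any real obstacle once the height invariant is in hand: it is a short parity deduction. The genuine work lies upstream, in establishing the height invariant, that is, in combining the $b_2+b_4-b_5-b_7$ invariant of Theorem \ref{th:T4} with a mod-$2$ coloring invariant; and downstream, in actually exhibiting odd $T_4$-tilings of the modified rectangles one wishes to rule out, so that this lemma can be applied. That constructive step, rather than the deduction above, is where the real content of the tiling question will reside.
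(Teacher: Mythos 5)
Your proof is correct and is essentially the paper's own argument: the paper likewise combines the constancy of the total tile count (area invariant) with the height invariant, noting that an odd tiling forces $h(\Gamma) \equiv m \pmod 2$ while an $S$-tiling of the same region would force $h(\Gamma) \equiv m+k \pmod 2$ with $k$ odd, a contradiction. Your bookkeeping via the invariant parity of $N - s(\alpha)$ is just a mild rephrasing of the same deduction.
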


\begin{proof} In any tiling of a region $\Gamma$ by $T_4$ the total number of tiles used is constant.  If $T_4$ oddly tiles $\Gamma$, then $T_4$ tiles $\Gamma$ with $m + k$ tiles where $m \geq 0$ is the number of tiles from $S$ and $k \geq 1$ is the number of tiles from $T_4\setminus S$ with $k$ odd.  Then $h(\Gamma) \equiv m$ (mod 2).  If $S$ itself tiles $\Gamma$ it would have to do so with $m+k$ tiles and we would have $h(\Gamma) \equiv m+k$ (mod 2) as well.  But since $k$ is odd, this would be a contradiction.
\end{proof}

\begin{figure}[h]
\centering
\includegraphics[width=3.3in]{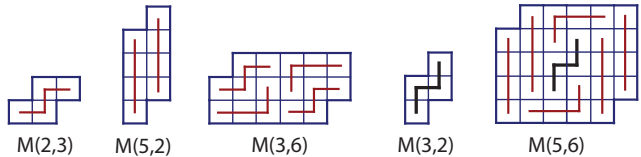}
\caption{Three modified rectangles tiled by $S$, and two oddly tiled by $T_4$.}
\label{fig:mutex}
\end{figure}

\begin{theorem}\label{th:4} Let $a, b > 1$.  The set $S$ tiles $M(a,b)$ if and only if

\noindent {\rm 1.} $a \equiv 2$ {\rm (mod 4)} and $b$ is odd; or
\par\noindent {\rm 2.} $a$ is odd and $ab \equiv 2$ {\rm (mod 8)}.
\end{theorem}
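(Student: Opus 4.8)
The plan is to prove the two implications separately: for necessity I would combine an area count with the odd-tiling lemma (Lemma~\ref{lemma:oddtile}), and for sufficiency I would build explicit tilings from a few base regions together with Pak's classification of the $S$-tileable rectangles in \cite[Theorem 7.1]{pak}.

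First, for necessity, suppose $S$ tiles $M(a,b)$. Each tile of $S$ has area $4$ and $|M(a,b)| = ab-2$, so $ab \equiv 2 \pmod 4$; inspecting residues, this forces exactly one of $a,b$ to be $\equiv 2 \pmod 4$ and the other to be odd. If $a \equiv 2 \pmod 4$ and $b$ is odd we are already in case~1 and nothing remains. Otherwise $a$ is odd and $b \equiv 2 \pmod 4$, and I must upgrade $ab \equiv 2 \pmod 4$ to $ab \equiv 2 \pmod 8$. I would do this by contradiction: assuming $ab \equiv 6 \pmod 8$, I would exhibit a tiling of $M(a,b)$ by $T_4$ that uses an odd number of tiles from $T_4 \setminus S$, so that Lemma~\ref{lemma:oddtile} forbids any $S$-tiling. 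The construction reduces to two small oddly tiled base regions --- for instance $M(3,2)$, which is a single tile that one checks lies in $T_4 \setminus S$, and $M(5,6)$ --- which I then enlarge by inserting full $S$-tileable bands between the two corner notches ($4 \times b$ vertical bands to raise $a$ by $4$, and $a \times 8$ horizontal bands to raise $b$ by $8$). Each inserted band contributes no tile of $T_4 \setminus S$, so the parity of exceptional tiles is preserved, and the two bases reach all four residue classes of $(a,b) \bmod 8$ with $ab \equiv 6 \pmod 8$. This produces an odd $T_4$-tiling in every excluded case and finishes necessity.

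For sufficiency I would use the same geometric move --- inserting a full $S$-tileable rectangle as a band separating the two notches of a modified rectangle yields a larger modified rectangle --- but now with every tile taken from $S$. One checks directly that the minimal representatives are $S$-tileable: $M(2,3)$ (a single $S$-tile) for case~1, and $M(5,2)$ and $M(3,6)$ for case~2. Raising $b$ by $2$ through an $a \times 2$ band and $a$ by $4$ through a $4 \times b$ band then sweeps out all of case~1, while raising $a$ by $4$ through a $4 \times b$ band and $b$ by $8$ through an $a \times 8$ band sweeps out all of case~2 (the increments $+4$ and $+8$ are exactly those preserving $ab \bmod 8$). Since all of these bands are $S$-tileable by \cite[Theorem 7.1]{pak}, every $(a,b)$ satisfying condition~1 or condition~2 admits an $S$-tiling.

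The hard part will be the necessity direction, specifically manufacturing the odd $T_4$-tilings when $ab \equiv 6 \pmod 8$: this requires pinning down the shapes of the tiles in $S$ precisely enough to confirm that the base tilings (above all $M(5,6)$) really use an odd number of tiles outside $S$, and then checking that the band increments cover every excluded residue class while preserving that parity. Sufficiency, by contrast, is essentially bookkeeping once the base tilings are drawn and \cite[Theorem 7.1]{pak} is invoked. As an alternative to the odd-tiling construction, one could instead compute the height invariant $h(M(a,b)) \bmod 2$ directly from the coloring argument and compare it with the tile count $(ab-2)/4$; either way, the asymmetry between $a$ and $b$ in the statement reflects that the tiles of $S$ may only be translated, not rotated.
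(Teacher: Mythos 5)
Your overall strategy coincides with the paper's: the same three base tilings $M(2,3)$, $M(5,2)$, $M(3,6)$ for sufficiency, the same oddly tiled bases $M(3,2)$ and $M(5,6)$ for necessity, the same increments ($a \mapsto a+4$, $b \mapsto b+8$), and the same appeal to Lemma~\ref{lemma:oddtile} to kill the residues with $ab \equiv 6 \pmod 8$. The gap is in the step you treat as routine bookkeeping: the claim that one can ``insert a full $S$-tileable rectangular band between the two corner notches'' and thereby extend a tiling (or odd tiling) of $M(a,b)$ to the larger region. This fails on two counts. First, inserting a band into the middle of an already tiled region does not extend the tiling: the given tiling of $M(a,b)$ has tiles crossing any horizontal cut (e.g., the one-tile tiling of $M(2,3)$ is cut by every horizontal line), so the two pieces flanking the inserted rectangle carry no tilings of their own. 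The correct move --- and this is exactly the content of the paper's Lemma~\ref{lemma:th4} --- is to locate a translate of $M(a,b)$ \emph{intact} inside $M(a+4,b)$ (resp.\ $M(a,b+8)$) and tile the complement separately; but that complement is never a full rectangle. For $M(a+4,b)$ it is a four-row band with one square displaced from the top-left corner to below the bottom-left (tiled by $b$ copies of the vertical tile $t_8$), and for $M(a,b+8)$ it is the region $P$ of Figure~\ref{fig:lemmath4}, whose $S$-tileability needs a case analysis on $a$. So \cite[Theorem 7.1]{pak}, which concerns rectangles, is not the right certificate for the regions that actually arise.

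Second, and more decisively, your case~1 sweep raises $b$ by $2$ using an ``$a \times 2$ band,'' and no such rectangle is ever $S$-tileable in case~1: the only tile of $S$ that fits inside a region of width $2$ is the vertical tile $t_8$, so a width-$2$ rectangle is $S$-tileable only when $4 \mid a$, whereas case~1 has $a \equiv 2 \pmod 4$ (in particular the $2 \times 2$ square admits no tiling at all). The paper avoids this by performing the $+2$ step only at $a = 2$, where the complement of an intact $M(2,b)$ inside $M(2,b+2)$ is a single copy of $t_3$, and only afterwards growing $a$ by $4$ via Lemma~\ref{lemma:th4}. Your necessity direction inherits the same defect, since the parity-preserving extension of odd tilings also rests on the band insertion; it is repaired the same way. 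Once the extension lemma is stated and proved in this corrected form, the rest of your argument --- the area count, the residue analysis, and the use of Lemma~\ref{lemma:oddtile} --- goes through exactly as in the paper.
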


\begin{proof}

We begin by showing that tilings exist in these two cases.
First, suppose $a \equiv 2$ (mod 4) and $b$ is odd. Observe that $S$ tiles $M(2,3)$, as indicated in Figure \ref{fig:mutex}.
If $S$ tiles $M(2,b)$ then $S$ tiles $M(2,b+2)$ by tacking on a copy of the tile $t_3$, as suggested in Figure \ref{fig:M2b}.  It follows inductively that $S$ tiles $M(2,b)$ for any odd $b \geq 3$.  Furthermore, if $S$ tiles $M(a,b)$ then $S$ tiles $M(a+4,b)$ by Lemma \ref{lemma:th4}.  Thus, $S$ tiles $M(a,b)$ if $a = 2$ (mod 4) and $b$ is odd.

\begin{figure}[h]
\centering
\includegraphics[width=1.2in]{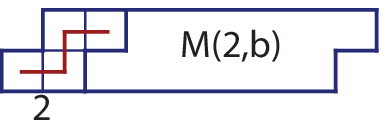}
\caption{If $S$ tiles $M(2,b)$ then $S$ tiles $M(2,b+2)$.}\label{fig:M2b}
\end{figure}

Next, suppose $a$ is odd and $ab \equiv 2$ (mod 8). Then either
$a \equiv$ $1 ~{\rm(mod ~4)}$ and $b \equiv 2$ (mod 8); or
$a \equiv 3$ (mod 4) and $b \equiv 6$ (mod 8).
Observe that $S$ tiles $M(5,2)$ and $M(3,6)$, as in Figure \ref{fig:mutex}.   Using Lemma \ref{lemma:th4} it follows by induction that $S$ tiles $M(a,b)$ in both of these subcases.  That is, $S$ tiles $M(a,b)$ if $a$ is odd and $ab \equiv 2$ (mod 8).

\smallskip
Now we prove that $S$ does not tile $M(a,b)$ for any other values
of $a$ and $b$. If $S$ tiles $M(a,b)$ then $ab - 2$ is divisible
by 4. That is, $ab \equiv 2$ (mod 4)  so either $a \equiv 2$ (mod 4) and $b \geq 3$ is odd, or $a \geq 3$ is odd and $b \equiv 2$ (mod 4).  By considering $a$ modulo 4 and $b$ modulo 8, the $a$ is odd case can be subdivided into these four subcases:

\noindent (i) $a \equiv 1$ (mod 4) and $b \equiv 2$ (mod 8)

\noindent  (ii) $a \equiv 1$ (mod 4) and $b \equiv 6$ (mod 8)

\noindent  (iii) $a \equiv 3$ (mod 4) and $b \equiv 2$ (mod 8)

\noindent (iv) $a \equiv 3$ (mod 4) and $b \equiv 6$ (mod 8).

\noindent We use induction to rule out subcases ii and iii.  For the base cases, observe that $T_4$ oddly tiles $M(5,6)$ and $M(3,2)$, as shown in Figure \ref{fig:mutex}.  Then Lemma \ref{lemma:th4} provides the inductive step: if $T_4$ oddly tiles a region $M(a,b)$ then it oddly tiles $M(a+4,b)$ and $M(a,b+8)$.  Thus, $T_4$ oddly tiles $M(a,b)$ in both subcases ii and iii, so $S$ itself does not tile $M(a,b)$ in these cases.

Thus, if $S$ tiles $M(a,b)$ then either $a \equiv 2$ (mod 4) and $b$ is odd, or $a$ is odd and $ab \equiv 2$ (mod 8).
\end{proof}

\begin{lemma}\label{lemma:th4} Let $a,b > 1$.  If $T_4$ oddly tiles $M(a,b)$ then $T_4$ oddly tiles $M(a+4,b)$ and $M(a,b+8)$.  Moreover, if $S$ tiles $M(a,b)$, then $S$ tiles $M(a+4,b)$ and $M(a,b+8)$.
\end{lemma}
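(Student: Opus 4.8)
The plan is to prove both assertions simultaneously by a single geometric construction: realize each larger modified rectangle as the original $M(a,b)$ together with an added region that abuts it along the boundary and that can be tiled using \emph{only} tiles from $S$. Since such an added region contributes no tiles from $T_4 \setminus S$, overlaying its $S$-tiling on any given tiling of $M(a,b)$ leaves the number of $T_4 \setminus S$ tiles unchanged, hence preserves its parity. This one observation yields both conclusions at once: an $S$-tiling of $M(a,b)$ extends to an $S$-tiling of the larger region, and an odd $T_4$-tiling (one using an odd number of tiles from $T_4\setminus S$) extends to an odd $T_4$-tiling.

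First I would set up the two decompositions with care, since the removed corners shift as $a$ or $b$ grows. Placing $M(a,b)$ in the first $a$ rows and $b$ columns, I would verify that $M(a+4,b)$ is the disjoint union of $M(a,b)$ with a region $A$ consisting of the four full rows directly below $M(a,b)$, together with the single square $(a,b)$ (absent from $M(a,b)$ but present in $M(a+4,b)$) and with the new lower-right corner $(a+4,b)$ deleted. The essential point is that $A$ lies below the tiles of $M(a,b)$ and merely plugs the old removed corner, so the tiling of $M(a,b)$ is never cut. The analogous region $B$ for $M(a,b+8)$ comprises the eight full columns to the right of $M(a,b)$, plus the square $(a,b)$ and minus the corner $(a,b+8)$; geometrically $B$ is an $a\times 8$ block whose bottom row is sheared one unit to the left.

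Next I would exhibit the $S$-tilings of $A$ and $B$. The region $A$ is immediate: each of its columns is a vertical strip of four squares, that is, a copy of the vertical tetromino $t_8\in S$ (the column through $(a,b)$ simply begins one row higher), so $A$ is tiled by $b$ copies of $t_8$ for every $a$ and $b$. The region $B$ is the crux, precisely because $S$ contains no horizontal $1\times 4$ tile while $B$ has width $8$. Here I would place four copies of $t_3$ along the bottom to absorb the shear and consume the bottom two rows of $B$, reducing the problem to tiling the remaining $(a-2)\times 8$ rectangle. I would then show that an $m\times 8$ rectangle is $S$-tileable for every $m\geq 2$ (and trivially for $m=0$): by vertical $t_8$ tiles when $4\mid m$, by an alternating $t_5,t_2$ pattern when $m=2$, by an explicit pattern when $m=3$, and by stacking four-row blocks to reach all larger heights. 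The single value of $a$ left uncovered is $a=3$, where the leftover rectangle would be the untileable $1\times 8$ strip; this case I would settle by giving a direct $S$-tiling of $B$.

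The main obstacle is exactly this explicit $S$-tiling of the width-$8$ strip $B$, uniformly in the height $a$: the absence of the horizontal tetromino from $S$ is what forces the case analysis on $a\bmod 4$ together with the separate treatment of small heights. Once the tilings of $A$ and $B$ are in hand, the proof concludes by combining any tiling of $M(a,b)$ with the fixed $S$-tiling of the added region and reading off the parity statement recorded in the first paragraph, which gives both the $S$-tiling and the odd $T_4$-tiling extensions.
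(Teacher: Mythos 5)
Your proposal is correct and takes essentially the same approach as the paper: both decompose $M(a+4,b)$ as $M(a,b)$ plus a strip tiled by $b$ vertical $t_8$ tiles, and $M(a,b+8)$ as $M(a,b)$ plus a width-8 region handled by copies of $t_3$ along the sheared bottom rows together with $S$-tilings of $2\times 8$ and $3\times 8$ blocks, with $a=3$ treated as a special explicitly tiled case. The parity observation---that attaching a region tiled entirely by $S$ preserves the count of tiles from $T_4\setminus S$---is exactly the paper's justification for getting both the $S$-tiling and odd-tiling conclusions at once.
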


\begin{proof}

The region $M(a+4,b)$ may be partitioned as in Figure \ref{fig:lemmath4} (a) into two regions: $M(a,b)$ and a region above this that may be tiled by $S$ using $b$ copies of the vertical tile $t_8$.  In this way a tiling (resp. odd tiling) of $M(a,b)$ by $S$ (resp. $T_4$) may be extended to a tiling (resp. odd tiling) of $M(a+4,b)$ by $S$ (resp. $T_4$).

\begin{figure}[h]
\centering
\includegraphics[width=2in]{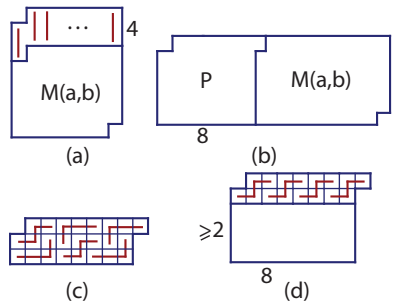}
\caption{(a) Going from $M(a,b)$ to $M(a+4,b)$; (b) Going from $M(a,b)$ to $M(a,b+8)$; (c) tiling $P$ when $a = 3$; (d) tiling $P$ when $a \geq 4$.}\label{fig:lemmath4}
\end{figure}
The region $M(a,b+8)$ may be partitioned as in Figure \ref{fig:lemmath4} (b).  Region $P$ in this partition may be tiled by $S$ for all $a \geq 2$.  Indeed, if $a =2$, then $P$ is $M(2,9)$ and may be tiled with 4 copies of the tile $t_3$.  If $a = 3$ then $P$ may be tiled by $S$ as in Figure \ref{fig:lemmath4} (c), and if $a \geq 4$ then $P$ may be tiled by $S$ according to Figure \ref{fig:lemmath4} (d), noting that $S$ tiles the rectangle in the figure since its width is 8 and its height is at least 2.  (To see this, find a tiling of the $2 \times 8$ and $3 \times 8$ rectangle by $S$.)  In this way a tiling (respectively, odd tiling) of $M(a,b)$ by $S$ (resp. $T_4$) may be extended to a tiling (respectively, odd tiling) of $M(a,b+8)$ by $S$ (respectively, $T_4$).
\end{proof}

We now derive the height invariant by first considering a coloring argument.  Following \cite[Section 2]{moore}, ``color" the squares of the lattice in horizontal swaths using integers 0, 1, 2, 3 as indicated in Figure \ref{fig:4coloring}.  Note that for each tile in $T_4$, the sum of its square colors, wherever the tile is placed in the lattice, is constant modulo 4.  That is, this coloring produces a well-defined function $\phi:\mathcal{R} \to \mathbb{Z}_4$, where $\phi(\Gamma)$ equals the sum of the colors in $\Gamma$, modulo 4.  Moreover, we have $\phi(t_1)=0$, $\phi(t_2)= 1$, $\phi(t_3)=2$, $\phi(t_4)=3, \phi(t_5)=3$, $\phi(t_6)= 0$, $\phi(t_7)=1$, $\phi(t_8)=2$, which yields a tile invariant that Pak calls the shade invariant in \cite{pak}. If $\alpha$ denotes a tiling of $\Gamma$ by $T_4$ then $$b_2(\alpha) + 2b_3(\alpha) +3b_4(\alpha) + 3b_5(\alpha)+b_7(\alpha)+2b_8(\alpha)\equiv \phi(\Gamma)~~({\rm mod~} 4).$$

Combine this shade invariant with the tile invariant in Theorem \ref{th:T4} and view the result modulo 2 to obtain the height invariant.

\begin{figure}[h]
\centering
\centerline{\includegraphics[width=.9
in]{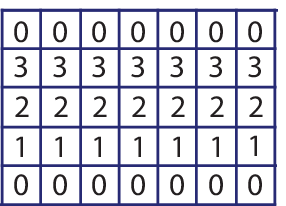}}
\caption{A coloring of the integer lattice}\label{fig:4coloring}
\end{figure}

We close this discussion by showing that Thoerem \ref{th:4} cannot have been proved using a coloring argument.  A {\em signed tiling} of a region $\Gamma$ by a set $T$ of tiles consists of a placement of tiles from $T$, each given a weight of +1 or -1, in such a way that the sum of the weights of the tiles is +1 for each square inside the region $\Gamma$, and 0 for each square outside the region.  There is a simple connection between coloring arguments and signed tilings, as developed in \cite[Theorem 5.2]{conway} and \cite[Theorem 8.1]{pak}.  In particular, if a coloring argument can be used to show a region is not tileable by $T$ then there is no signed tiling of the region by $T$.  This means that (a) if you know a region $\Gamma$ cannot be tiled by $T$, and (b) there exists a signed tiling of $\Gamma$ by $T$, then the untileability of $\Gamma$ by $T$ cannot have been proved using a coloring argument.

\begin{lemma} \label{lemma:th5} If $T_4$ oddly tiles a simply connected region $\Gamma$, then there exists a signed tiling of $\Gamma$ by $S$.
\end{lemma}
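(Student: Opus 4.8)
The plan is to reduce the lemma to a local statement about single tiles and then assemble a signed tiling by superposition. First I would fix a tiling $\alpha$ of $\Gamma$ by $T_4$ witnessing that $T_4$ oddly tiles $\Gamma$, and observe that it suffices to produce, for each individual tile $t$ appearing in $\alpha$, a signed tiling by $S$ of the single region occupied by $t$. Indeed, the tiles of $\alpha$ partition $\Gamma$, so if each tile is replaced by a signed $S$-tiling of its own footprint (for $t \in S$ this is simply $t$ with weight $+1$), then at every square the weights contributed by the unique tile containing it sum to $+1$ and the weights contributed by every other tile sum to $0$; superposing all of these signed tilings therefore yields a signed tiling of $\Gamma$ by $S$. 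I would note in passing that this argument never uses the parity of the number of tiles drawn from $T_4 \setminus S$, so the conclusion in fact holds for \emph{any} tiling of $\Gamma$ by $T_4$, and the stated (odd) case follows a fortiori.

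It then remains to show that each of the four tetrominoes in $T_4 \setminus S$ admits a signed tiling by $S$. For the bookkeeping I would record each tile as a Laurent polynomial in $\mathbb{Z}[X^{\pm 1}, Y^{\pm 1}]$, a unit square with lower-left corner $(i,j)$ contributing the monomial $X^i Y^j$; a signed tiling of a region by $S$ is then exactly an expression of its indicator polynomial as a $\mathbb{Z}[X^{\pm 1}, Y^{\pm 1}]$-combination of the four polynomials attached to $t_2, t_3, t_5, t_8$. The engine of the proof is a single exchange relation: the $S$-tiles $t_3$ and $t_5$ have footprints that differ only by transposing a rightward and an upward step, moving one square from $(1,0)$ to $(0,1)$, so their polynomial difference is exactly $X - Y$, which therefore lies in the span generated by $S$; that is, $X \equiv Y$ modulo $S$. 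Since the polynomial of any ribbon tetromino has the form $1 + v_1 + v_1 v_2 + v_1 v_2 v_3$ with each $v_i \in \{X, Y\}$, the relation $X \equiv Y$ collapses every such polynomial to $1 + t + t^2 + t^3$, which is the reduction of the $S$-tile $t_8 = 1 + Y + Y^2 + Y^3$; as both $t_8$ and $X - Y$ lie in the $S$-span, every ribbon tetromino does too. Tracing these reductions backwards gives explicit signed $S$-tilings of $t_1, t_4, t_6, t_7$; for instance one checks directly that the straight tetromino satisfies $t_1 = t_2 + X^2 t_3 - X^2 t_5$.

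The content of the argument is therefore concentrated entirely in the exchange relation $X \equiv Y$; once it is in hand, the collapse of every ribbon tetromino to the common class $1 + t + t^2 + t^3$ is automatic and the four explicit signed tilings are a routine finite verification. The point requiring the most care is the legitimacy of the superposition step---that adding the signed tilings of the individual tiles of a partition produces a genuine signed tiling, with net weight $+1$ on $\Gamma$ and $0$ outside---which I would state precisely before invoking it. I would present the four single-tile signed tilings in a figure and leave the square-by-square weight count to the reader.
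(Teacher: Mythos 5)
Your proof is correct, and its skeleton is exactly the paper's: reduce to signed $S$-tilings of the individual tiles of $T_4 \setminus S$, then superpose these over the given $T_4$-tiling of $\Gamma$ (keeping weight $+1$ on the tiles already in $S$). The only divergence is in how the four single-tile signed tilings are produced. The paper simply exhibits them in a figure (Figure \ref{fig:nocolor}) and performs the replacement; you derive them algebraically, encoding tiles as Laurent polynomials, observing the exchange relation $t_3 - t_5 = X - Y$, and noting that every ribbon tetromino reduces to $1 + t + t^2 + t^3$, the class of $t_8 \in S$, once $X \equiv Y$. Your sample identity checks out: with $t_2 = 1 + X + X^2 + X^2Y$, $t_3 = 1 + X + XY + X^2Y$, $t_5 = 1 + Y + XY + X^2Y$, one indeed gets $t_2 + X^2(t_3 - t_5) = 1 + X + X^2 + X^3 = t_1$. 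What your route buys is a uniform reason the four signed tilings exist, rather than four ad hoc pictures, and it makes transparent your correct side-remark that the oddness hypothesis is never used in this lemma (it matters only when this lemma is combined with Lemma \ref{lemma:oddtile} to conclude that Theorem \ref{th:4} is not provable by a coloring argument). What the paper's version buys is brevity: the pictures \emph{are} the proof. One step you should justify rather than assert: passing from ``$u - t_8$ vanishes under the substitution $Y = X$'' to ``$u - t_8$ lies in the ideal generated by $X - Y$'' in $\mathbb{Z}[X^{\pm 1}, Y^{\pm 1}]$; this is routine (clear denominators and divide by $Y - X$, which is monic in $Y$, the remainder being the evaluation at $Y = X$), but it is the hinge on which your algebraic reduction turns.
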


\begin{proof} Figure \ref{fig:nocolor} shows a signed tiling of each tile in $T_4\setminus S$ by the set $S$.  Then, if $T_4$ oddly tiles a region $\Gamma$, assign weight +1 to each tile in the odd tiling that comes from $S$, and replace each of the tiles from $T_4\setminus S$ with a signed tiling of that tile by the set $S$.  This produces a signed tiling of $\Gamma$ by $S$.
\end{proof}

\begin{figure}[h]
\centering
\centerline{\includegraphics[width=2.5in]{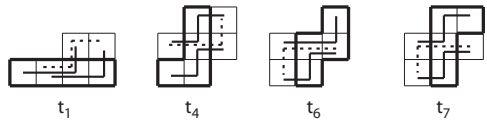}}
\caption{Signed tilings of each tile in $T_4\setminus S$ by the set $S$.}\label{fig:nocolor}
\end{figure}

In the proof of Theorem \ref{th:4} we found that $T_4$ oddly tiled $M(a,b)$ in various cases.  By Lemma \ref{lemma:th5} it follows that there are signed tilings of $M(a,b)$ by $S$ in these cases, and the following result is now immediate.

\begin{theorem} Theorem \ref{th:4} cannot be proved with a coloring argument.
\end{theorem}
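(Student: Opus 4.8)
The plan is to turn the signed-tiling obstruction recalled just before Lemma~\ref{lemma:th5} against the hardest cases of Theorem~\ref{th:4}. The guiding principle, established in \cite[Theorem 5.2]{conway} and \cite[Theorem 8.1]{pak}, is that a coloring argument can only ever certify untileability, and if it certifies that $\Gamma$ is not tileable by $T$ then no signed tiling of $\Gamma$ by $T$ can exist. Reading this contrapositively, as in items (a) and (b) preceding Lemma~\ref{lemma:th5}: whenever $\Gamma$ is untileable by $T$ \emph{and} admits a signed tiling by $T$, the untileability of $\Gamma$ cannot have come from a coloring argument. The whole proof is to exhibit regions to which this applies.

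First I would identify where a coloring argument could possibly enter a proof of Theorem~\ref{th:4}. The ``if'' direction merely displays explicit tilings, so no coloring is used there; the entire coloring-amenable burden lies in the ``only if'' direction, that is, in the untileability assertions. Among these, the divisibility condition $ab\equiv 2\pmod 4$ is elementary, while the genuinely nontrivial untileability occurs in subcases (ii) and (iii) of the proof of Theorem~\ref{th:4} (namely $a\equiv 1\pmod 4,\ b\equiv 6\pmod 8$ and $a\equiv 3\pmod 4,\ b\equiv 2\pmod 8$), which were ruled out precisely by showing that $T_4$ oddly tiles $M(a,b)$ rather than by any area or coloring consideration.

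Next I would invoke Lemma~\ref{lemma:th5}. Since $M(a,b)$ is simply connected and $T_4$ oddly tiles $M(a,b)$ throughout subcases (ii) and (iii) (as established in the proof of Theorem~\ref{th:4}), Lemma~\ref{lemma:th5} produces a signed tiling of $M(a,b)$ by $S$ in exactly these cases. Thus conditions (a) and (b) are met: $M(a,b)$ is untileable by $S$ yet admits a signed tiling by $S$. The obstruction then forbids any coloring argument from certifying the untileability of $M(a,b)$ by $S$ in subcases (ii) and (iii). Since these untileability statements are indispensable to Theorem~\ref{th:4}, the theorem cannot be established by coloring arguments alone, which is the desired conclusion.

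The main obstacle has in effect already been cleared by Lemma~\ref{lemma:th5}: one needs the regions demanding the deepest untileability argument to be exactly the regions carrying signed tilings by $S$, and it is fortunate (indeed structural) that the odd-tiling phenomenon underlying both the height invariant and Lemma~\ref{lemma:th5} supplies both at once. Beyond this the remaining work is purely logical bookkeeping---pinning down that ``proved by a coloring argument'' concerns the untileability half of Theorem~\ref{th:4}, and applying the Conway--Lagarias/Pak obstruction in its contrapositive form.
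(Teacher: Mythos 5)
Your proposal is correct and follows essentially the same route as the paper: the paper likewise combines the Conway--Lagarias/Pak principle (a coloring proof of untileability rules out signed tilings) with Lemma~\ref{lemma:th5} applied to the odd tilings of $M(a,b)$ produced in subcases (ii) and (iii) of the proof of Theorem~\ref{th:4}, concluding the result is immediate. Your version merely spells out the logical bookkeeping that the paper leaves implicit.
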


\medskip
{\bf Acknowledgments.}
The author would like to thank Bill Bogley for our discussions as this paper took shape, and the reviewers for their helpful comments and suggestions.


\end{document}